\renewcommand{\leq}{\leqslant}
\renewcommand{\geq}{\geqslant}
\DeclareMathOperator{\Fix}{Fix}
\DeclareMathOperator{\id}{id}
\DeclareMathOperator{\Ker}{Ker}
\DeclareMathOperator{\Rad}{Rad}
\DeclareMathOperator{\Soc}{Soc}
\DeclareMathOperator{\Sym}{Sym}
\DeclareMathOperator{\Ann}{Ann}
\newcommand{\Z}{\mathbb{Z}}
\newcommand{\Aut}{\operatorname{Aut}}
\numberwithin{equation}{section}
\numberwithin{figure}{section}
\numberwithin{table}{section}
\newtheorem{thm}{Theorem}[section]
\newtheorem*{thm*}{Theorem}
\newtheorem{lem}[thm]{Lemma}
\newtheorem{cor}[thm]{Corollary}
\newtheorem{pro}[thm]{Proposition}
\theoremstyle{definition} 
\newtheorem{defn}[thm]{Definition}
\newtheorem{corollary}[thm]{Corollary}
\newtheorem{exa}[thm]{Example}
\title[]{Nilpotency of skew braces and multipermutation solutions of the Yang-Baxter equation}
\author{E. Jespers \and A. Van Antwerpen \and L. Vendramin}
\address{Department of Mathematics and Data Science, Vrije Universiteit Brussel, Pleinlaan 2, 1050 Brussel, Belgium}
\email{Eric.Jespers@vub.be}
\email{Arne.Van.Antwerpen@vub.be}
\email{Leandro.Vendramin@vub.be}
\subjclass[2010]{Primary:16T25; Secondary: 81R50}
\keywords{Yang-Baxter, solution, braces, multipermutation, right nilpotency, left nilpotency, central nilpotency}
\begin{document}

\begin{abstract}
We study relations between different notions of nilpotency in the
context of skew braces and applications to the structure of 
solutions to the Yang--Baxter equation. In particular, 
we consider annihilator nilpotent skew braces, an important
class that turns out to be a brace-theoretic analog to the class
of nilpotent groups. In this vein, several well-known theorems 
in group theory are proved in the more general setting of skew braces.
\end{abstract}
 
\maketitle
\section{Introduction}

The Yang--Baxter equation (YBE) is a fundamental object in pure mathematics and physics. It arose from the work of the Nobel prize-winning 
physicist Yang \cite{Ya} on statistical mechanics and independently 
in the work of Baxter \cite{Ba} concerning the 8-vertex model. A number of exciting developments in physics and mathematics have led to the conclusion that the YBE is a fundamental mathematical structure, with connections to various topics such as knot theory, 
braid theory, operator theory, Hopf algebras, quantum groups, 3-manifolds and the monodromy of differential equations.

A solution to the YBE is a pair $(V,R)$, where
$V$ is a vector space and $R\colon V\otimes V\to V\otimes V$ is a
linear map such that 
\[
(R\otimes\id)(\id\otimes R)(R\otimes\id)=(\id\otimes R)(R\otimes\id)(\id\otimes R).
\]
An absolutely fascinating family of solutions consist of those maps $R$ 
induced by linear extensions of a bijective map 
$r\colon X\times X\to X\times X$, 
where $X$ is a basis of $V$. 
As this idea captures all the combinatorics behind the YBE, 
one says that the pair $(X,r)$ is a combinatorial solution, also called set-theoretic solution by Drinfel'd \cite{MR1183474}.

The fundamental problem of constructing combinatorial solutions 
with given properties is nowadays based on the use of certain (associative and non-associative) algebraic structures acting on the solution. Examples of such structures are 
groups of I-type \cite{MR1637256}, bijective 1-cocycles \cite{MR1769723,MR1769723,MR1809284}, skew braces \cite{MR3177933,MR2278047,MR3647970} and cycle sets \cite{MR2132760}. 
Properties of solutions can be described in terms of such structures. For that reason, the algebra behind 
the YBE is now the focus of intensive research. One particular algebraic structure stands out: skew braces, whose theory has its origins in Jacobson radical rings, and now appears in several 
different areas of mathematics \cite{MR3291816,MR3763907}. These algebraic structures cover the class of bijective 
non-degenerate combinatorial solutions, i.e. bijective solutions $(X,r)$ such that if one writes 
\[
r(x,y)=(\sigma_x(y),\tau_y(x))
\]
for any $x,y \in X$, then all the maps  $\sigma_x$ and $\tau_y$ are bijective. 
For simplicity's sake we call bijective non-degenerate solutions just solutions throughout the paper.

A first source of inspiration for this paper is the work  
of Diaconis about properties of random algebraic objects \cite{MR1159288}. 
Our motivation is the following
question: Which properties can be found in a randomly chosen solution to the YBE?  

A solution $(X,r)$ is said to be \emph{multipermutation} if $(X,r)$ 
can be retracted into the trivial solution 
over the singleton after finitely many steps of identifying elements $x$ and $y$ with $\sigma_x =\sigma_y$ and $\tau_x =\tau_y$. 
Our first goal is to \emph{provide a rigorous framework} 
for the following experimental observation: \emph{almost all} 
finite solutions are multipermutation solutions. 
In fact, among all involutive 
solutions with at most ten elements, more than 95\% are
multipermutation solutions. 

The importance of multipermutation solutions was first recognized  
in the seminal work of Etingof, Schedler and Soloviev \cite{MR1722951} for involutive solutions. This was
later confirmed by several existing connections 
between multipermutation solutions
and other topics such as left-ordered groups and right nilpotency skew braces, see for example 
\cite{MR3177933,MR2652212,MR3861714,MR2885602,MR3814340}.
 
At the moment, we are far from being able to 
understand properties of randomly chosen solutions to the YBE. However,
to provide a rigorous framework 
that proves that almost every solution is a multipermutation solution, 
it is somewhat clear that this will be achieved 
studying nilpotency of skew braces.

There are two skew braces that control the structure of a non-degenerate bijective solution $(X,r)$. 
The first one is the structure skew brace $G(X,r)$. The multiplicative
group of this brace is isomorphic to the group with generators
in $X$ and relations $xy=\sigma_x(y)\tau_y(x)$ for $x,y\in X$.
The additive
group of $G(X,r)$ is isomorphic to the group 
with generators $X$ and relations $y^{-1}xy=\sigma_y \tau_{\sigma^{-1}_x (y) }(x)$.

To study multipermutation solutions right nilpotency 
was introduced by Rump  \cite{MR2278047} for skew braces of abelian type (that is, the additive group of the respective  skew brace is abelian) and later
extended to arbitrary skew braces in \cite{MR3763907,MR3957824}.
A combination of results of Ced\'{o}, Smoktunowicz and Vendramin \cite[Theorem 2.20]{MR3957824} 
and Ced\'{o}, Jespers, Kubat, Van Antwerpen and Verwimp \cite[Theorem 4.13]{cedo2020various} 
proved the following equivalence  for solutions:
\begin{align*}
(X,r)\text{ \it is multipermutation}
&\Longleftrightarrow
G(X,r)\text{ \it is right nilpotent of nilpotent type},
\end{align*}
where being of nilpotent type means that the additive structure of the respective skew brace is nilpotent. This was shown by Gateva--Ivanova \cite{MR3861714} for involutive solutions.
The second skew brace associated to a non-degenerate bijective solution is the permutation skew brace $\mathcal{G}(X,r)$, i.e. 
the subgroup of $\Sym (X) \times \Sym (X)$  generated by the elements $(\sigma_x,\tau_x^{-1})$. This skew brace is a quotient of $G(X,r)$ by an ideal contained in 
its socle $\Soc(G(X,r))$. 
Moreover, using Proposition 2.19 in \cite{MR3957824}, one obtains the characterization $$(X,r)\text{ \it is multipermutation} \Longleftrightarrow \mathcal{G}(X,r)\text{ \it is right nilpotent of nilpotent type}.$$ In particular, if $(X,r)$ is finite, this characterization is realized by the  finite skew brace $\mathcal{G}(X,r)$.

By analogy, Rump \cite{MR2278047}, and Ced\'{o}, Smoktunowicz and Vendramin \cite{MR3957824} 
also defined left nilpotency of a skew brace, where the latter authors showed that left nilpotency of skew braces of nilpotent type is equivalent to the multiplicative group being nilpotent.
Left and right $p$-nilpotency of finite
skew braces were studied in \cite{MR4062375,MR3935814}. 
Recently, motivated by universal algebra, 
Bonatto and Jedli{\v c}ka \cite{BonJed2021} introduced
the notion of central nilpotency, which yields a subclass of left and right nilpotent skew braces. As this class of skew braces is defined through iterative use of the annihilator, we call this annihilator nilpotent.

Computer experiments show that among the 36116 involutive multipermutation solutions of size at most eight 34770 of them admit a left nilpotent $\mathcal{G}(X,r)$ permutation brace. This implies that at least for solutions of size at most $8$ almost all of their permutation braces are annihilator nilpotent. 

There are several reasons to consider annihilator nilpotent skew braces. On the one hand, the previous observation suggests that the study of
annihilator nilpotent skew braces is the first step towards 
the problem of finding a precise framework that allows us to prove
that almost all solutions are multipermutation solutions. On the other hand,
this structural property is extremely important for the general algebraic 
theory of skew braces, as annihilator nilpotent skew braces are to skew 
braces what nilpotent groups are for groups.

\medskip
The paper is organized as follows. In Section \ref{preliminaries}
we discuss definitions and basic properties of skew braces. We introduce
annihilator nilpotent skew braces and present several examples. 
This section also includes several 
characterizations of anihilator nilpotency and results regarding
connections between this
and similar nilpotency notions. In Section \ref{noetherian} we study
annihilator nilpotent skew braces with ascending chain condition (ACC) on sub skew braces. 
It turns out that  an annihilator nilpotent skew brace
has ACC on sub skew braces 
if and only if it is finitely generated as a skew brace. 
In Section \ref{torsion}
we study the relationship between additive and multiplicative torsion 
of annihilator nilpotent skew braces. Finally, in Section \ref{generating}
we prove a brace-theoretic analog of a result of 
Hirsch about non-generating elements (Theorem \ref{thm:HirschBraceNilp})
and study its consequences. 

\section{Definitions and preliminaries}
\label{preliminaries}
In this section we fix notation and introduce some crucial definitions and lemmas.
A \emph{skew brace} is a triple $(B,+,\circ)$, where $(B,+)$ and $(B,\circ)$ are
groups (called the additive and multiplicative group respectively) and 
\[
a\circ (b+c)=a\circ b-a+a\circ c
\]
holds for all $a,b,c\in B$. If $B$ 
is a skew brace and $a\in B$, then $\overline{a}$ denotes the inverse of 
$a$ with respect to the multiplicative operation. 
For  a skew brace $B$ the map $\lambda\colon (B,\circ)\to\Aut(B,+)$, $a\mapsto \lambda_a$, with 
$\lambda_a(b)=-a+a\circ b$, is a group homomorphism. 

A skew brace is said to be \emph{trivial} if both operations coincide. 

We regularly make use of the following lemma, see for example \cite{MR4023387}.
Recall that for elements $a$ and $b$ of a 
skew left brace $B$ one  puts 
\[
a * b = \lambda_a(b)-b=-a+a\circ b-b.
\]
This operation $*$ measures the ``difference'' between the additive 
and multiplicative operations. The center of a group $(G,\cdot)$ is denoted $Z(G)$ or $Z(G,\cdot)$ to emphasize the operation.

\begin{lem} 
\label{lem:id*}
 Let $B$ be a skew brace. For any $x,y,z \in B$  the following properties hold:
 \begin{enumerate}
 \item  $x*(y+z)=x*y +y +x*z-y$,
 \item $(x+ y)*z=x*(\lambda_x^{-1}(y) * z)+\lambda_x^{-1}(y)*z +x*z$,
 \item $(x \circ y) *z = x*(y*z) + y*z + x*z$.
 \end{enumerate}
\end{lem}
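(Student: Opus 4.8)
The plan is to reduce everything to the two structural facts recorded just before the statement: each $\lambda_x$ is an additive automorphism of $(B,+)$, and $\lambda\colon(B,\circ)\to\Aut(B,+)$ is a group homomorphism, so that $\lambda_{x\circ y}=\lambda_x\lambda_y$. Throughout I would work with the rewriting $a*b=\lambda_a(b)-b$ rather than with $-a+a\circ b-b$, since it isolates the only two mechanisms in play. The one point demanding care is that $(B,+)$ need not be abelian, so in every computation the order of the summands must be preserved and the inverse of a sum reversed.

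For (1) I would simply expand $\lambda_x(y+z)=\lambda_x(y)+\lambda_x(z)$ using additivity, and then substitute $\lambda_x(y)=x*y+y$ and $\lambda_x(z)=x*z+z$, obtaining $x*(y+z)=\lambda_x(y)+\lambda_x(z)-z-y=x*y+y+x*z+z-z-y=x*y+y+x*z-y$. This step is immediate and uses no homomorphism property.

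The heart of the argument is (3), and it is the step I expect to require the most bookkeeping. Starting from $(x\circ y)*z=\lambda_{x\circ y}(z)-z=\lambda_x\lambda_y(z)-z$, I would compute the right-hand side term by term. Writing $u=y*z=\lambda_y(z)-z$, additivity of $\lambda_x$ gives $x*u=\lambda_x(u)-u=\lambda_x\lambda_y(z)-\lambda_x(z)+z-\lambda_y(z)$, where the reversal in $-u=z-\lambda_y(z)$ is exactly where non-commutativity enters. Then $x*u+u$ telescopes to $\lambda_x\lambda_y(z)-\lambda_x(z)$, and adding $x*z=\lambda_x(z)-z$ cancels the $\lambda_x(z)$ terms and leaves $\lambda_x\lambda_y(z)-z$, matching the left-hand side. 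The only obstacle is to resist collapsing terms as though $+$ were commutative; once the cancellations $-\lambda_y(z)+\lambda_y(z)$ and $-\lambda_x(z)+\lambda_x(z)$ are carried out in the correct positions, the identity drops out.

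Finally, I would deduce (2) from (3) rather than prove it directly. From $x\circ w=x+\lambda_x(w)$ one obtains the rewriting $x+y=x\circ\lambda_x^{-1}(y)$. Substituting this into the left-hand side and applying (3) with second argument $\lambda_x^{-1}(y)$ yields $(x+y)*z=(x\circ\lambda_x^{-1}(y))*z=x*(\lambda_x^{-1}(y)*z)+\lambda_x^{-1}(y)*z+x*z$, which is precisely the claimed formula. Thus the logical dependency is that (1) stands alone, (3) follows by the direct computation above, and (2) is a corollary of (3).
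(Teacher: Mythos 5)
Your proof is correct: (1) follows from additivity of $\lambda_x$ with the inverse correctly reversed as $-(y+z)=-z-y$; in (3) the telescoping $x*u+u=\lambda_x\lambda_y(z)-\lambda_x(z)$ and then $+\,x*z$ yielding $\lambda_x\lambda_y(z)-z$ checks out; and deducing (2) from (3) via $x+y=x\circ\lambda_x^{-1}(y)$ is valid. The paper itself gives no proof of this lemma (it cites the literature), and your argument is exactly the standard computation that the cited reference carries out, with the derivation of (2) as a corollary of (3) being an economical touch.
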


\begin{defn}
Let $B$ be a skew brace. Recall that the socle  of $B$, denoted $\Soc(B)$, is defined as $$ \Soc(B) = \Ker \lambda \cap Z(B,+).$$ Moreover, the annihilator $\Ann(B)$ of $B$ is defined as $$ \Ann(B) = \Soc(B) \cap \Fix(B),$$
where $\Fix(B)=\{x\in B:\lambda_a(x)=x\text{ for all $a\in B$}\}$. 
\end{defn}

Note that 
\[
\Ann(B)=\{ x\in B : x\circ a =a\circ x =x+a =a+x, \mbox{ for all } x\in B\}.
\]
Similarly to the upper central series in nilpotent groups, one can define the $n$-th annihilator of a skew brace. If $\Ann_n(B)$ is defined, then denote $\pi:B \longrightarrow B/\Ann_n(B)$ the canonical morphism. One denotes
\[
\Ann_{n+1}(B) = \pi^{-1}(\Ann(B/\Ann_n(B))).
\]
By the isomorphism theorems, these are all ideals in $B$. 

In \cite{BonJed2021}, Bonatto and Jedli{\v c}ka introduced the following notion of nilpotency. Note that they call this centrally nilpotent, as the annihilator of a skew brace corresponds to a notion of center coming from universal algebra. However, as this would further mystify the terminology, we opt to call this notion annihilator nilpotent.

\begin{defn}
Let $B$ be a skew brace. An ascending series 
\[
0=I_0 \subseteq I_1 \subseteq ... \subseteq I_n = B
\]
of ideals of $B$ is called an annihilator series of $B$, if $I_{j+1}/I_j \subseteq \Ann(B/I_{j})$ for $0\leq j <n$. If $B$ has an annihilator series, then $B$ is called annihilator nilpotent. Clearly, $B$ is annihilator nilpotent if and only if $\Ann_n(B)=B$ for some positive integer $n$.
\end{defn}

Obviously, sub skew braces and epimorphic images of annihilator nilpotent skew braces also are annihilator skew braces. Note that if $B$ 
is an annihilator nilpotent skew brace, 
then both $(B,+)$ and $(B,\circ)$ are nilpotent groups.

\begin{exa}
Let $(G,\circ)$ be a group and denote by $\circ^{opp}$ the opposite operation of $\circ$. Then $(G,\circ,\circ^{opp})$ is annihilator nilpotent if and only if the group 
$(G,\circ)$ is nilpotent. In particular, every result on annihilator nilpotent skew braces recovers a similar result on nilpotent groups.
\end{exa}

Recall that a skew brace $B$ is said to be \emph{two-sided} if 
\[
(a+b)\circ c=a\circ c-c+b\circ c
\]
holds for all $a,b,c\in B$. 

\begin{exa}
It was shown in \cite{MR2278047} that a Jacobson radical ring 
$(B,+,*)$ corresponds to a two-sided brace $B$, where 
$a\circ b = a+ a*b +b$. Moreover, ring theoretic ideals of $B$ 
coincide with brace theoretic ideals of $B$. 
Hence nilpotent Jacobson radical rings (for example finite Jacobson radical rings) are annihilator nilpotent as braces. 
To prove this one makes use of the following fact:  if the $*$-product 
of $n$ factors of $B$ is  zero, then the $*$-product of $n-1$ factors 
of $B$ is contained  in $\Ann(B)$ as brace. 
\end{exa}

\begin{exa}
The operation $x\circ y=x+y+2xy$, using the ring structure on $\Z/4$, induces an annihilator nilpotent skew brace $(\Z/4,+,\circ)$ with multiplicative group
isomorphic to $\Z/2\times\Z/2$ and additive group $\Z/4$.
\end{exa}

\begin{exa}
Of the $47$ skew braces of order $8$, there exist only $2$ skew braces that are not annihilator nilpotent, which even turn out to be of abelian type. This indicates that they dominate skew $p$-braces. Note that only $3$ of these are not two-sided. Of order $16$ only $40$ of $1605$ skew braces are not annihilator nilpotent. Of order $27$ only $4$ of the $101$ are not annihilator nilpotent of which $27$ are not two-sided.
\end{exa}

We now link the notion of annihilator nilpotent skew braces to the known notions of nilpotency. For this we first recall these. The following notions of left and right nilpotent were defined by Rump \cite{MR2278047}. It was shown \cite{MR3814340,MR3957824} that left nilpotency coincides for finite skew braces of nilpotent type
(i.e. the additive group is nilpotent) to the nilpotency of the multiplicative group. Moreover, it was shown \cite{MR3957824} that such a finite skew brace is the direct product of braces of prime power order.

Let $B$ be a skew brace and let $X$ and $Y$ be subsets of $B$. Then $X*Y$ is, by definition, 
the additive subgroup of $B$ generated by $\{x*y:x\in X,\,y\in Y\}$.

\begin{defn}
Let $B$ be a skew brace. Denote $B^{n+1} = B*B^n$ with $B^1=B$ for any positive integer $n$. 
Then the skew brace $B$ is said to be left nilpotent if $B^n=0$ for some positive integer $n$.
\end{defn}

On the other hand, as mentioned in the introduction, right nilpotency is related to 
multipermutation solutions. In fact, a solution $(X,r)$ is multipermutation if and only if $G(X,r)$ is a right nilpotent skew brace of nilpotent type. 


\begin{defn}
Let $B$ be a skew brace. Denote $B^{(n+1)}=B^{(n)}*B$ with $B^{(1)}=B$. 
Then the skew brace $B$ is said to be right nilpotent if $B^{(n)}=0$ for some positive integer $n$.
\end{defn}

Smoktunowicz introduced \cite{MR3814340} strongly nilpotent braces. 

\begin{defn}
Let $B$ be a skew brace. Denote 
\[
B^{\left[n+1\right]}=\left\langle\bigcup_{i=1}^n B^{[i]}*B^{[n-i]}\right\rangle_+
\]
with $B^{[1]}=B$. Then the skew brace $B$ is said to be strongly nilpotent, if $B^{[n]}=0$ for some positive integer $n$.
\end{defn}

It was shown by Ced\'{o}, Smoktunowicz and Vendramin \cite{MR3957824} (and earlier for left braces by Smoktunowicz) that strongly nilpotent skew braces coincide with skew braces that are both left and right nilpotent.
Hence we have the following link between the different notions of nilpotency of skew braces.




\[\begin{tikzcd}
 & \text{Annihilator nilpotent} \arrow{d} & 
 \\
 & \text{Strongly nilpotent} \arrow{dl} \arrow{dr}& 
 \\
 \text{Left nilpotent} & & \text{Right nilpotent}
\end{tikzcd}\]

\begin{exa}
    Let $X=\{1,\dots,5\}$ and $(X,r)$ be the involutive solution
    given by $\sigma_1=\sigma_2=\sigma_3=\id$, $\sigma_4=(23)(45)$ and
    $\sigma_5=(12)(45)$. Then $(X,r)$ is a multipermutation
    solution. Moreover, $\mathcal{G}(X,r)$ is a skew brace
    of abelian type with additive group isomorphic to $\Z/6$ and
    multiplicative group isomorphic to $\Sym_3$. The skew brace
    $\mathcal{G}(X,r)$ is right nilpotent and not left nilpotent.
\end{exa}

Let $A$ be a skew brace. An \emph{s-series} of $A$ is a sequence 
\[
A=I_0\supseteq I_1\supseteq I_2\supseteq\cdots\supseteq I_n=0 
\]
of ideals of $A$ such that $I_{j-1}/I_j\subseteq\Soc(A/I_j)$ for each $j$.

In \cite[Lemma 2.14]{MR3957824} it is proved that 
if $A=I_0\supseteq I_1\supseteq I_2\supseteq\cdots\supseteq I_n=0$ is an $s$-series for $A$, then $A^{(i+1)}\subseteq I_i$ 
for all i.

Let 
$\Soc_0(A)=\{0\}$ and, for $n\geq 1$, let $\Soc_n(A)$ be the unique
ideal of $A$ such that $\Soc_{n-1}(A)\subseteq\Soc_n(A)$ and 
\[
\Soc_n(A)/\Soc_{n-1}(A)=\Soc(A/\Soc_{n-1}(A)).
\]
Then 
\[
\Soc_0(A)\subseteq\Soc_1(A)\subseteq\cdots\subseteq\Soc_n(A)\subseteq\cdots
\]
is the socle series of $A$. By \cite[Lemma 2.15]{MR3957824}, a skew brace
$A$ admits an s-series if and only if there exists a positive integer $n$ 
such that $A=\Soc_n(A)$.

Let $B$ be a skew brace and $X$ and $Y$ be subsets of $B$. We write $[X,Y]_+$ (resp. $[X,Y]_\circ$) to denote 
the additive (resp. multiplicative) subgroup generated by commutators 
\[
[x,y]_+=x+y-x-y\qquad\text{(resp. $[x,y]_\circ=x\circ y\circ \overline{x}\circ\overline{y}$)}, 
\]
$x\in X$ and $y\in Y$. 

\begin{pro}
\label{pro:leftandrightgivesmult}
Let $B$ be a skew brace of nilpotent type. If $B$ is left and right nilpotent, then $(B,\circ)$ is a nilpotent group.
\end{pro}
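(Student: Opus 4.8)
The plan is to reduce to the strongly nilpotent case and then assemble an explicit central series for $(B,\circ)$ from the strong nilpotency filtration and the additive upper central series. Since $B$ is both left and right nilpotent, by the cited result of Ced\'{o}, Smoktunowicz and Vendramin \cite{MR3957824} it is strongly nilpotent, so $B^{[n]}=0$ for some $n$, and I would argue by induction on $n$. Recall that each $B^{[k]}$ is an ideal and that $B^{[i]}*B^{[j]}\subseteq B^{[i+j]}$. In particular $B*B^{[n-1]}\subseteq B^{[n]}=0$ and $B^{[n-1]}*B\subseteq B^{[n]}=0$, which is exactly the statement that $B^{[n-1]}\subseteq\Ker\lambda\cap\Fix(B)$: every $a\in B^{[n-1]}$ satisfies $\lambda_a=\id$ and $\lambda_g(a)=a$ for all $g\in B$. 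The quotient $B/B^{[n-1]}$ is again of nilpotent type and, since $(B/B^{[n-1]})^{[n-1]}=0$, strongly nilpotent of strictly smaller index, so by induction $(B/B^{[n-1]},\circ)$ is nilpotent; pulling this central series back along the canonical map gives a central series of $(B,\circ)$ reaching down to $B^{[n-1]}$.

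The remaining task is to refine $B^{[n-1]}$ into a $(B,\circ)$-central series. The key computation is that on $B^{[n-1]}$ multiplicative conjugation agrees with additive conjugation. Indeed, for $a\in B^{[n-1]}$ and $g\in B$, using $\lambda_{g\circ a}=\lambda_g\lambda_a$, then $\lambda_a=\id$ and $\lambda_g(a)=a$, and finally $\lambda_g(\overline g)=-g+g\circ\overline g=-g$, one obtains
\[
g\circ a\circ\overline g = g+\lambda_g(a)+\lambda_g\lambda_a(\overline g)=g+a+\lambda_g(\overline g)=g+a-g .
\]
Moreover $\circ$ and $+$ coincide on $B^{[n-1]}$, since it lies in $\Ker\lambda$. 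Because $(B,+)$ is nilpotent, the additive conjugation action of $B$ on the additive subgroup $B^{[n-1]}$ is nilpotent: setting $W_i=B^{[n-1]}\cap Z_i(B,+)$ yields a chain $0=W_0\subseteq W_1\subseteq\cdots\subseteq W_c=B^{[n-1]}$ with $[B,W_i]_+\subseteq W_{i-1}$.

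Each $W_i$ is $\lambda$-invariant, and by the displayed identity it is also invariant under multiplicative conjugation (as $Z_i(B,+)$ and $B^{[n-1]}$ are normal in $(B,+)$), hence normal in $(B,\circ)$; the same identity, together with $\overline a=-a$ on $B^{[n-1]}$, turns $g\circ a\circ\overline g\circ\overline a=[g,a]_+$ and thus upgrades $[B,W_i]_+\subseteq W_{i-1}$ to $[B,W_i]_\circ\subseteq W_{i-1}$. Splicing the chain $(W_i)$ below $B^{[n-1]}$ with the pulled-back series above it produces a central series of $(B,\circ)$, so $(B,\circ)$ is nilpotent; the only point needing care at the seam is the centrality condition $[B,B^{[n-1]}]_\circ\subseteq W_{c-1}$, which is precisely the identity $g\circ a\circ\overline g\circ\overline a=[g,a]_+\in Z_{c-1}(B,+)\cap B^{[n-1]}$. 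The main obstacle is this conjugation identity and the bookkeeping that guarantees the spliced chain is genuinely $(B,\circ)$-central at each step, including the junction; once the identity is in hand, the rest is formal.
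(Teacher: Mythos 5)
Your proof is correct, but it takes a genuinely different route from the paper's. The paper inducts on the length of the socle series, which exists by \cite[Lemma 2.15]{MR3957824} precisely because $B$ is right nilpotent of nilpotent type: the inductive hypothesis applied to $B/\Soc(B)$ gives $\gamma_m(B,\circ)\subseteq\Soc(B)$, then the identity $[b,x]_\circ=b*x$ for $x\in\Soc(B)$ (using $\lambda_x=\id$ and $\lambda_b(x)\in Z(B,+)$) yields $\gamma_{m+l}(B,\circ)\subseteq B^l$, and left nilpotency is invoked only at the very last step to terminate the lower central series. You instead front-load the work into the heavier equivalence with strong nilpotency (\cite[Theorem 2.30]{MR3957824}, which this paper also cites), induct on the index $n$ with $B^{[n]}=0$, and assemble an explicit central series: from $B*B^{[n-1]}=B^{[n-1]}*B=0$ your bottom layer sits in $\Ker\lambda\cap\Fix(B)$, which --- unlike the socle --- need not be additively central, and you compensate by intersecting with the additive upper central series, $W_i=B^{[n-1]}\cap Z_i(B,+)$; this is where the nilpotent-type hypothesis enters your argument, whereas in the paper it enters through the existence of the s-series. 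Your conjugation identity $g\circ a\circ\overline{g}=g+a-g$ on $\Ker\lambda\cap\Fix(B)$, its consequence $[g,a]_\circ=[g,a]_+$ (using $\overline{a}=-a$ and that $\circ$ and $+$ agree there), and the splice --- including the seam condition $[B,B^{[n-1]}]_\circ\subseteq W_{c-1}$, which is just the case $i=c$ of $[B,W_i]_\circ\subseteq W_{i-1}$ --- are all sound. What each approach buys: the paper's argument needs only the cheap socle machinery and keeps left and right nilpotency as visibly separate inputs; yours costs the strong-nilpotency theorem up front but returns an explicit central series, hence an explicit bound on the class of $(B,\circ)$ in terms of $n$ and the additive nilpotency class. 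Two bookkeeping remarks: the facts that each $B^{[k]}$ is an ideal and that $B^{[i]}*B^{[j]}\subseteq B^{[i+j]}$ are proved in \cite{MR3957824} and should be cited rather than merely recalled (note also that the displayed definition of $B^{[n+1]}$ in this paper has an index typo, $B^{[n-i]}$ where $B^{[n+1-i]}$ is intended, so your indexing is the correct reading); and the induction step tacitly uses that $(B/B^{[n-1]})^{[n-1]}=0$, which holds because $*$ and additive generation commute with the canonical projection.
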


\begin{proof}
Let $B$ be left and right nilpotent of nilpotent type. 
By \cite[Lemma 2.15]{MR3957824}, there exists a
positive integer $m$ such that $B=\Soc_m(B)$. We may assume that $k$ is the smallest non-negative integer such that $B=\Soc_k(B)$, so $k$ is the length of 
the socle series of $B$. We proceed by induction 
on $k$.
If $B = \Soc(B)$, the theorem clearly holds.
Suppose we have shown that the theorem holds for skew braces with a socle series of length at most $k$.

Let $B$ be a left and right nilpotent skew brace of nilpotent type with a socle series of length $k+1$.
Denote $\gamma_n(B,\circ)$ the $n$-th term of the lower central series of $(B,\circ)$. Then, applying the induction 
hypothesis on $B/\Soc(B)$, one obtains that $\gamma_m(B,\circ) \subseteq \Soc(B)$ for some positive 
integer $m$. 
Consider $x \in \Soc(B)$ and $b \in B$, then $$ \left\lbrack b,x \right\rbrack_{\circ} = 
b \circ x \circ \overline{b} \circ \overline{x} =
 b + \lambda_{b}(x) - \lambda_{b\circ x \circ \overline{b}}(b) 
 - \lambda_{\left\lbrack b,x \right\rbrack_{\circ}}(x).$$ 
As $\lambda_x = \id_B$  and because $\lambda_b(x) \in Z(B,+)$ 
this becomes $$\left\lbrack b,x \right\rbrack_{\circ} = b + \lambda_b(x) -b-x= b*x.$$ 
As $\Soc(B)$ is a normal subgroup of $(B,\circ)$ and $\gamma_m(B,\circ) \subseteq \Soc(B)$, it follows that for all positive integers 
$l$ it holds that $\gamma_{m+l}(B) \subseteq \Soc(B)$. Hence, applying the formula above, one 
obtains that $\gamma_{m+l}(B) \subseteq B^l$. As $B$ is left nilpotent, this shows that 
the lower central series of $(B,\circ)$ ends in $0$ and thus $(B,\circ)$ is a nilpotent group.
\end{proof}

Inspired by the lower central series of group theory, 
Bonatto and Jedli{\v c}ka \cite{BonJed2021} introduced the following series of ideals.

\begin{defn}
Let $B$ be a skew brace and $I$ an ideal of $B$. Denote $\Gamma_0(I) = I$ and $\Gamma_{n+1}(I) =\left< \Gamma_n(I)*B, B*\Gamma_n(I), [B,\Gamma_n(I)]_+\right>_+$ for $n\geq0$.
\end{defn}

Note that if $B$ is a skew brace and $I$ is an ideal of $B$, then each $\Gamma_n(I)$ is an ideal of $B$.
It was shown by Bonatto and Jedli{\v c}ka \cite{BonJed2021} that the series
  $$\Gamma_0 (B) \supseteq \Gamma_1 (B) \supseteq \Gamma_2 (B) \supseteq \cdots$$
 plays the role of a lower central series for annihilator nilpotency, which we state for further reference.

\begin{thm}[Bonatto--Jedli{\v c}ka]
\label{thm:bonatto-nilpotent}
Let $B$ be a skew brace. Then $B$ is annihilator nilpotent if and only if $\Gamma_c(B) = 0$ for some positive integer $c$.
\end{thm}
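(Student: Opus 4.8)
The plan is to mirror the classical equivalence between the upper and lower central series of a nilpotent group, replacing the group-theoretic center by the annihilator. The first task is to record a workable description of the annihilator. From the identity $\Ann(B)=\{x\in B: x\circ a=a\circ x=x+a=a+x\text{ for all }a\in B\}$ noted above, one checks directly that $x\in\Ann(B)$ if and only if $x*a=0$, $a*x=0$ and $[x,a]_+=0$ for every $a\in B$: the condition $x*a=0$ for all $a$ encodes $x\in\Ker\lambda$, the condition $a*x=0$ for all $a$ encodes $x\in\Fix(B)$, and $[x,a]_+=0$ for all $a$ encodes $x\in Z(B,+)$. Applying this inside a quotient $B/I$ (where $I$ is an ideal) and pulling back along the canonical map yields the recursive description
\[
\Ann_{n+1}(B)=\{x\in B: x*B\subseteq\Ann_n(B),\ B*x\subseteq\Ann_n(B),\ [x,B]_+\subseteq\Ann_n(B)\},
\]
with $\Ann_0(B)=0$. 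This is the engine for both implications.

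For the forward implication I would assume $\Ann_n(B)=B$ and prove by induction on $i$ that $\Gamma_i(B)\subseteq\Ann_{n-i}(B)$ for $0\leq i\leq n$. The base case $i=0$ is $\Gamma_0(B)=B=\Ann_n(B)$. For the step, if $\Gamma_i(B)\subseteq\Ann_{n-i}(B)$, then every $g\in\Gamma_i(B)$ satisfies $g*B$, $B*g$, $[g,B]_+\subseteq\Ann_{n-i-1}(B)$ by the recursive description; since $\Ann_{n-i-1}(B)$ is an additive subgroup, and since $[B,\Gamma_i(B)]_+$ and $[\Gamma_i(B),B]_+$ generate the same additive subgroup, all three generating sets of $\Gamma_{i+1}(B)$ lie in $\Ann_{n-i-1}(B)$. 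Hence $\Gamma_{i+1}(B)\subseteq\Ann_{n-i-1}(B)$. Taking $i=n$ gives $\Gamma_n(B)\subseteq\Ann_0(B)=0$, so $c=n$ works.

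For the converse I would assume $\Gamma_c(B)=0$ and prove by induction on $i$ that $\Gamma_{c-i}(B)\subseteq\Ann_i(B)$ for $0\leq i\leq c$. The base case is $\Gamma_c(B)=0=\Ann_0(B)$. For the step, take $g\in\Gamma_{c-i-1}(B)$; directly from the definition of $\Gamma$ one has $g*B$, $B*g\subseteq\Gamma_{c-i}(B)$ and $[g,B]_+\subseteq[B,\Gamma_{c-i-1}(B)]_+\subseteq\Gamma_{c-i}(B)$, and the induction hypothesis gives $\Gamma_{c-i}(B)\subseteq\Ann_i(B)$. The recursive description then forces $g\in\Ann_{i+1}(B)$, so $\Gamma_{c-i-1}(B)\subseteq\Ann_{i+1}(B)$. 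Taking $i=c$ yields $B=\Gamma_0(B)\subseteq\Ann_c(B)$, whence $B$ is annihilator nilpotent.

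I expect the only delicate point to be the first step: one must verify carefully that the vanishing conditions $\overline{x}*\overline{a}=0$, $\overline{a}*\overline{x}=0$, $[\overline{x},\overline{a}]_+=0$ in $B/I$ translate exactly into $x*a\in I$, $a*x\in I$, $[x,a]_+\in I$ in $B$, and that $\Ker\lambda$, $\Fix(B)$ and $Z(B,+)$ are correctly recovered. Once the recursive description of $\Ann_{n+1}(B)$ is established, the two inductions are routine and completely parallel, and the potential non-commutativity of $(B,+)$ is harmless, since $[X,Y]_+$ and $[Y,X]_+$ always generate the same additive subgroup.
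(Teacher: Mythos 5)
Your proof is correct. Note, however, that the paper does not prove this theorem at all: its ``proof'' is the single line ``See \cite[Theorem 2.7]{BonJed2021}'', so there is no in-paper argument to compare against, and your write-up supplies a self-contained proof where the paper defers to Bonatto--Jedli{\v c}ka. Your argument is the natural one, directly transplanting the classical equivalence of the upper and lower central series from group theory: the element-wise recursive description $\Ann_{n+1}(B)=\{x\in B: x*B\subseteq\Ann_n(B),\ B*x\subseteq\Ann_n(B),\ [x,B]_+\subseteq\Ann_n(B)\}$ is verified correctly (it uses that each $\Ann_n(B)$ is an ideal, as the paper notes, so the quotient exists and the canonical map respects $*$ and $[\cdot,\cdot]_+$), and both double inductions $\Gamma_i(B)\subseteq\Ann_{n-i}(B)$ and $\Gamma_{c-i}(B)\subseteq\Ann_i(B)$ go through as stated. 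The two small points you flag as delicate are indeed the only ones needing care, and you handle both: surjectivity of $\pi$ lets you quantify over $a\in B$ rather than over cosets, and $[b,g]_+=-[g,b]_+$ shows $[B,\Gamma_i(B)]_+$ and $[\Gamma_i(B),B]_+$ generate the same additive subgroup, so the asymmetry in the definition of $\Gamma_{n+1}$ versus the symmetric annihilator conditions is harmless. By comparison, the original source of \cite{BonJed2021} frames this through universal-algebraic commutator theory; your direct induction is more elementary and arguably better suited to the paper's setting, since it uses nothing beyond Lemma \ref{lem:id*}-level identities and the definitions already on the page.
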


\begin{proof}
    See \cite[Theorem 2.7]{BonJed2021}. 
\end{proof}

\begin{corollary}
Let $B$ be a skew brace of nilpotent type. Then the following properties  are equivalent:
\begin{enumerate}
    \item $B$ is annihilator nilpotent.
    \item $B$ is left and right nilpotent.
    \item $B$ is strongly nilpotent.
\end{enumerate}
\end{corollary}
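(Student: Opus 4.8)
The plan is to separate the two genuinely general equivalences from the single implication where nilpotent type is essential. The equivalence of (2) and (3) holds for every skew brace: it is exactly the theorem of Ced\'o, Smoktunowicz and Vendramin \cite{MR3957824} recalled above, that strong nilpotency is the conjunction of left and right nilpotency. The implication (1) $\Rightarrow$ (3) is also general: by Theorem \ref{thm:bonatto-nilpotent} annihilator nilpotency gives $\Gamma_c(B)=0$ for some $c$, and an easy induction on $n$ shows $B^{n+1}\subseteq\Gamma_n(B)$ and $B^{(n+1)}\subseteq\Gamma_n(B)$, since $B*\Gamma_n(B)$ and $\Gamma_n(B)*B$ occur among the additive generators of $\Gamma_{n+1}(B)$; hence $B$ is left and right nilpotent, i.e. strongly nilpotent. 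Thus the whole content of the corollary reduces to the reverse implication (3) $\Rightarrow$ (1): a strongly nilpotent skew brace of nilpotent type is annihilator nilpotent.

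For this I would again pass through Theorem \ref{thm:bonatto-nilpotent} and show that the Bonatto--Jedli{\v c}ka series reaches $0$, arguing by induction on the strong nilpotency class $c$, that is, the least $c$ with $B^{[c]}=0$. The base case $c=1$ is trivial, as then $B=B^{[1]}=0$. For the inductive step I would put $J=B^{[c-1]}$, which is an ideal of $B$ \cite{MR3957824}. Since $\Gamma$ is compatible with the canonical epimorphism $\pi\colon B\to B/J$ (all of $*$, the additive commutator and additive-subgroup closure are preserved by brace homomorphisms) and since $(B/J)^{[c-1]}=\pi(B^{[c-1]})=0$, the quotient $B/J$ is strongly nilpotent of class at most $c-1$ and of nilpotent type; the induction hypothesis then yields $\Gamma_{c'}(B/J)=0$, that is $\Gamma_{c'}(B)\subseteq J$ for some $c'$.

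It remains to check that $\Gamma$ annihilates the ideal $K:=\Gamma_{c'}(B)$ after finitely many further steps, and here the defining feature of the top layer enters. Because $K\subseteq B^{[c-1]}$ we have $K*B\subseteq B^{[c-1]}*B\subseteq B^{[c]}=0$ and likewise $B*K=0$, so in the recursion $\Gamma_{m+1}(K)=\langle \Gamma_m(K)*B,\,B*\Gamma_m(K),\,[B,\Gamma_m(K)]_+\rangle_+$ the two $*$-terms vanish and only the additive commutator survives. An immediate induction then gives $\Gamma_m(K)\subseteq K$ and $\Gamma_m(K)\subseteq\gamma_{m+1}(B,+)$, the $(m+1)$-st term of the additive lower central series. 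As $B$ is of nilpotent type, $\gamma_{d+1}(B,+)=0$ for some $d$, whence $\Gamma_d(K)=0$. Using the semigroup identity $\Gamma_{c'+d}(B)=\Gamma_d(\Gamma_{c'}(B))=\Gamma_d(K)$, which holds because each $\Gamma_{n+1}$ depends only on $\Gamma_n$ and $B$, I would conclude $\Gamma_{c'+d}(B)=0$, and hence by Theorem \ref{thm:bonatto-nilpotent} that $B$ is annihilator nilpotent.

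The main obstacle is precisely the entanglement of the multiplicative $*$-products and the additive commutators inside the single series $\Gamma$: neither strong nilpotency nor additive nilpotency alone controls it. The idea that unlocks the argument is that on the deepest nonzero layer $B^{[c-1]}$ both one-sided $*$-products against $B$ already vanish, so there $\Gamma$ degenerates to the ordinary lower central series of $(B,+)$; this is exactly where the hypothesis of nilpotent type is used, and it cannot be dropped (it is vacuous in the abelian-type case but indispensable here). The routine points to verify are the monotonicity and quotient-compatibility of $\Gamma$, the semigroup identity, the inclusions $B^{n+1},B^{(n+1)}\subseteq\Gamma_n(B)$, and that the terms $B^{[n]}$ are ideals; none of these requires Proposition \ref{pro:leftandrightgivesmult}, which enters the surrounding theory only to identify the nilpotency of $(B,\circ)$.
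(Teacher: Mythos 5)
Your overall architecture for $(3)\Rightarrow(1)$ is genuinely different from the paper's and contains one attractive idea, but it has a genuine gap at the single point where it leans on the claim that $J=B^{[c-1]}$ is an ideal of $B$, which you attribute to \cite{MR3957824}. For skew braces whose additive group is non-abelian this is not available: Smoktunowicz's proof that the terms $A^{[n]}$ are ideals is for braces of abelian type, and the obstruction is precisely normality in $(B,+)$. For a generator $x*y$ with $x\in B^{[i]}$, $y\in B^{[j]}$, $i+j=n$, the identity $x*(b+y)=x*b+b+x*y-b$ of Lemma \ref{lem:id*} only gives
\[
b+x*y-b=-(x*b)+x*(b+y)\in B^{[i]}*B\subseteq B^{[i+1]},
\]
which lands in $B^{[n]}$ only when $j=1$; trying instead to shift the depth onto the right-hand argument via $x*y=\lambda_x(y)-y$ produces additive commutators of the form $[\lambda_x(y),b]_+$ and $[y,b]_+$, i.e.\ elements of $[B^{[j]},B]_+$, which need not lie in $B^{[n]}$ (nor even in $B^{[j]}$). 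This is exactly why the present paper does not work with $B^{[n]}$ but introduces the enlarged series $\Gamma_{[n]}(B)$, grafting the terms $\bigl[\Gamma_{[i]}(B),\Gamma_{[n-i]}(B)\bigr]_+$ onto the recursion, and devotes a separate lemma to proving that \emph{those} are ideals. Without ideality of $B^{[c-1]}$ your quotient $B/J$, and with it your induction, collapses.

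The repair is not cosmetic. Passing to the ideal of $B$ generated by $B^{[c-1]}$ restores the quotient but destroys the crucial top-layer vanishing $K*B=B*K=0$, since the ideal closure need not kill one-sided $*$-products. Switching to the right series $B^{(n)}$, whose terms are genuine ideals (the paper itself invokes ``the ideal series $0\subseteq B^{(c)}\subseteq\cdots$'' in the proof of Theorem \ref{thm:HirschBrace}), retains $K*B=0$ but loses $B*K=0$, so on the top layer $\Gamma$ no longer degenerates to the additive lower central series: you are left with mixed chains of $B*{-}$ and $[B,{-}]_+$, which is precisely the difficulty the paper's proof is engineered to avoid — it inducts on the s-series, uses Proposition \ref{pro:leftandrightgivesmult} to obtain nilpotency of $(B,\circ)$, and then bounds $\Soc(B)\cap Z^k(B,\circ)\subseteq\Ann_k(B)$. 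To be clear, the rest of your argument is sound: the reduction of $(1)\Rightarrow(2),(3)$ via $B^{n+1},B^{(n+1)}\subseteq\Gamma_n(B)$, the quotient compatibility of $\Gamma$, the semigroup identity $\Gamma_{c'+d}(B)=\Gamma_d(\Gamma_{c'}(B))$, and the ideality of $K=\Gamma_{c'}(B)$ (from the paper's remark that each $\Gamma_n(I)$ is an ideal of $B$) are all correct, and your observation that on a layer where both one-sided $*$-products die the Bonatto--Jedli{\v c}ka series reduces to $\gamma_n(B,+)$ — the exact point where nilpotent type enters — is a nice insight. But as written the proof stands or falls with the unproved ideality of $B^{[c-1]}$, and in the non-abelian-additive setting that is a real gap, not a citation formality.
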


\begin{proof} 
By \cite[Theorem 2.30]{MR3957824}, statements $(2)$ and $(3)$ are equivalent.

That $(1)$ implies $(2)$ follows immediately from Theorem \ref{thm:bonatto-nilpotent}.

To prove $(2)$ implies $(1)$, 
assume  $B$ is left and right nilpotent. We will proceed by induction on the length of the s-series of $B$, which exists 
because of \cite[Lemma 2.16]{MR3957824}. 
By Proposition~\ref{pro:leftandrightgivesmult}, $(B,\circ)$ is a nilpotent group of class $c$. As $\Soc(B)$ is a normal subgroup of $(B,\circ)$, it follows that 
\[
0 \neq \Soc(B)\cap Z(B,\circ) =\Ann(B).
\]
Inductively, one sees that $\Soc(B) \cap Z^k(B,\circ) \subseteq\Ann_k(B)$ for all $k$. 
Hence $\Soc(B) \subseteq \Ann_c(B)$. Since, by the induction hypothesis, $B/\Soc(B)$ has a finite annihilator series, 
it follows that $B$ has a finite annihilator series,  i.e. $B$ is annihilator nilpotent.
\end{proof}

Inspired by the notion of strong nilpotency and a question of Bonatto and Jedli{\v c}ka, we introduce the following chain of ideals. 
\begin{defn}
Let $B$ be a skew brace. Denote $$\Gamma_{[n]}(B)=\left< \Gamma_{[i]}(B)*\Gamma_{[n-i]}(B), \left\lbrack\Gamma_{[i]}(B),\Gamma_{[n-i]}(B)\right\rbrack_+: 1\leq i\leq n-1\right>_+,$$ with $\Gamma_{[1]}(B)=B$.
\end{defn}

\begin{lem}
Let $B$ be a skew brace. Then each $ \Gamma_{\lbrack n\rbrack}(B)$ is an ideal and $\Gamma_{\lbrack n+1 \rbrack}(B) \subseteq \Gamma_{\lbrack n \rbrack}(B)$ for all positive integers $n$.
\end{lem}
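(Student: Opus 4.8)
The plan is to prove both statements at once by strong induction on $n$. The base case is immediate: $\Gamma_{[1]}(B)=B$ is an ideal, and $\Gamma_{[2]}(B)=\langle B*B,\,[B,B]_+\rangle_+\subseteq B$. For the inductive step I assume that $\Gamma_{[1]}(B),\dots,\Gamma_{[n]}(B)$ are ideals and that $\Gamma_{[k+1]}(B)\subseteq\Gamma_{[k]}(B)$ for all $k<n$, and I prove the same statements for $n$. The whole argument rests on one auxiliary fact, which I expect to be the technical heart: for any two ideals $I,J$ of $B$ the additive subgroup $\langle I*J,\,J*I,\,[I,J]_+\rangle_+$ is again an ideal of $B$. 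This is precisely the Bonatto--Jedli{\v c}ka commutator $[I,J]$ of two ideals, shown to be an ideal in \cite{BonJed2021}; alternatively it can be verified directly from Lemma~\ref{lem:id*} by checking that this additive subgroup is normal in $(B,+)$, that $B*\langle\dots\rangle_+\subseteq\langle\dots\rangle_+$ and $\langle\dots\rangle_+*B\subseteq\langle\dots\rangle_+$, expanding every $*$- and commutator-expression through the three identities of Lemma~\ref{lem:id*} and using that $I$ and $J$ are ideals, so that each correction term produced again lands in the generating set. Keeping these correction terms inside the subgroup is the only delicate point.

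Granting this, the idealness of $\Gamma_{[n+1]}(B)$ follows by bookkeeping. Grouping the defining generators of $\Gamma_{[n+1]}(B)$ according to the unordered pair $\{i,j\}$ with $i+j=n+1$, and noting that as $i$ runs through $1,\dots,n$ both orders $\Gamma_{[i]}(B)*\Gamma_{[j]}(B)$ and $\Gamma_{[j]}(B)*\Gamma_{[i]}(B)$ occur (while $[\Gamma_{[j]}(B),\Gamma_{[i]}(B)]_+$ generates the same additive subgroup as $[\Gamma_{[i]}(B),\Gamma_{[j]}(B)]_+$), one sees that
\[
\Gamma_{[n+1]}(B)=\sum_{\substack{i+j=n+1\\ i,j\geq 1}}\big\langle \Gamma_{[i]}(B)*\Gamma_{[j]}(B),\ \Gamma_{[j]}(B)*\Gamma_{[i]}(B),\ [\Gamma_{[i]}(B),\Gamma_{[j]}(B)]_+\big\rangle_+ .
\]
By the induction hypothesis every $\Gamma_{[i]}(B)$ and $\Gamma_{[j]}(B)$ appearing is an ideal, so by the auxiliary fact each summand is an ideal; since a sum of ideals is again an ideal (which itself follows from Lemma~\ref{lem:id*}), $\Gamma_{[n+1]}(B)$ is an ideal.

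It remains to prove $\Gamma_{[n+1]}(B)\subseteq\Gamma_{[n]}(B)$, for which I check that each generator lies in $\Gamma_{[n]}(B)$. For a generator coming from an index $i$ with $2\leq i\leq n$, the induction hypothesis gives $\Gamma_{[i]}(B)\subseteq\Gamma_{[i-1]}(B)$, whence $\Gamma_{[i]}(B)*\Gamma_{[n+1-i]}(B)\subseteq\Gamma_{[i-1]}(B)*\Gamma_{[n+1-i]}(B)$ and likewise for the additive commutator; since $(i-1)+(n+1-i)=n$ with both indices in $\{1,\dots,n-1\}$, these are among the defining generators of $\Gamma_{[n]}(B)$ and hence contained in it. The remaining generators are those with $i=1$ or $i=n$, namely $B*\Gamma_{[n]}(B)$, $\Gamma_{[n]}(B)*B$ and $[B,\Gamma_{[n]}(B)]_+$; all three lie in $\Gamma_{[n]}(B)$ because $\Gamma_{[n]}(B)$ is an ideal, hence $\lambda$-invariant, closed under right $*$-multiplication by $B$, and normal in $(B,+)$. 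This exhausts the generators, so $\Gamma_{[n+1]}(B)\subseteq\Gamma_{[n]}(B)$, completing the induction.
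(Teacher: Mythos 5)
Your proof has a genuine gap at what you yourself flag as its technical heart: the auxiliary fact that for \emph{arbitrary} ideals $I,J$ of $B$ the additive subgroup $K=\langle I*J,\, J*I,\, [I,J]_+\rangle_+$ is an ideal. This is not what is available in \cite{BonJed2021}: there (and in the remark following the definition of $\Gamma_n(I)$ in this paper) idealness is established for the commutator of an ideal with the \emph{whole brace}, i.e. for $\langle I*B,\, B*I,\, [B,I]_+\rangle_+$, and that case is genuinely special. To see why, note that for an additive subgroup $K$ that is $\lambda$-invariant and normal in $(B,+)$ --- both of which one can in fact extract for your $K$ from Lemma~\ref{lem:id*} together with normality of $I$ and $J$ --- normality in $(B,\circ)$ comes down to $K*B\subseteq K$. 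So for the generator $g=x*y$ with $x\in I$, $y\in J$ you must show $g*z\in K$ for \emph{every} $z\in B$. When $J=B$ this is trivial: ideals absorb, so $g\in I*B\subseteq I$ and hence $g*z\in I*B\subseteq K$, because the second slot of the generating set is unrestricted. When $I$ and $J$ are both proper, $g$ merely lies in $I\cap J$, so $g*z\in (I\cap J)*B$, and expanding $g*z=(\lambda_x(y)-y)*z$ via Lemma~\ref{lem:id*}(2) produces, among others, the term $\lambda_x(y)*z\in J*B$; nothing forces such terms back into $I*J+J*I+[I,J]_+$, whose second slots are restricted to $J$ and $I$ respectively. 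So precisely the step you wave at (``each correction term produced again lands in the generating set'') is where the verification breaks down, and with it the idealness of each summand in your otherwise correct decomposition.

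The paper escapes this by reversing your order of proof: it establishes the containment \emph{first} and then reads idealness off the shape of the definition. The containment $\Gamma_{[n+1]}(B)\subseteq\Gamma_{[n]}(B)$ needs no idealness at all: for $2\leq i\leq n$ argue as you do, and for the boundary generators use the containment hypothesis $\Gamma_{[n]}(B)\subseteq\Gamma_{[n-1]}(B)$ to get $B*\Gamma_{[n]}(B)\subseteq B*\Gamma_{[n-1]}(B)\subseteq\Gamma_{[n]}(B)$, the last inclusion holding \emph{by definition} of $\Gamma_{[n]}(B)$ (similarly for $\Gamma_{[n]}(B)*B$ and $[B,\Gamma_{[n]}(B)]_+$), so your appeal to idealness of $\Gamma_{[n]}(B)$ at this point is unnecessary. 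Once the containment is known, the same three inclusions one level up read $B*\Gamma_{[n+1]}(B)\subseteq B*\Gamma_{[n]}(B)\subseteq\Gamma_{[n+1]}(B)$, and likewise for $\Gamma_{[n+1]}(B)*B$ and $[B,\Gamma_{[n+1]}(B)]_+$; since $\Gamma_{[n+1]}(B)$ is an additive subgroup by construction, these three absorption properties are exactly what idealness requires, and no binary-commutator lemma is ever invoked. To repair your proposal, either prove the auxiliary fact honestly --- which I do not believe can be done for arbitrary pairs of ideals --- or swap the order of the two claims as the paper does.
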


\begin{proof}
We will show the result by induction on the index $n$. Clearly, $\Gamma_{[1]}(B)$ is an ideal. Moreover, as $\Gamma_{[2]}(B)=B*B+\left\lbrack B,B\right\rbrack_+$, it is easy to see that $\Gamma_{[2]}(B)/B*B$ is an ideal of $B/B*B$, which shows that $\Gamma_{[2]}(B)$ is an ideal and, clearly, $\Gamma_{[2]}(B)\subseteq \Gamma_{[1]}(B)$.  Suppose that for $k<n$ we have that $\Gamma_{[k]}(B) \subseteq \Gamma_{[k-1]}(B)$ and $\Gamma_{[k]}(B)$ is an ideal. We show the result for $n$. Note that if $k>1$ then, by the induction hypothesis, $$\Gamma_{[k]}(B) * \Gamma_{[n-k]}(B) \subseteq \Gamma_{[k-1]}(B)*\Gamma_{[n-k]}(B) \subseteq \Gamma_{[n-1]}(B).$$ Analogously, one proves that \[
\left\lbrack \Gamma_{[k]}(B),\Gamma_{[n-k]}(B)\right\rbrack_+ \subseteq \Gamma_{[n-1]}(B),
\]
which shows that $\Gamma_{[n]}(B) \subseteq \Gamma_{[n-1]}(B)$. In particular, this entails, by definition of $\Gamma_{[n]}(B)$, that $$ B*\Gamma_{[n]}(B) \subseteq B*\Gamma_{[n-1]}(B) \subseteq \Gamma_{[n]}(B).$$ Analogously, one shows that 
$\Gamma_{[n]}(B)*B\subseteq \Gamma_{[n]}(B)$ and 
\[
\left\lbrack B, \Gamma_{[n]}(B)\right\rbrack_+ \subseteq \Gamma_{[n]}(B).
\]
As $\Gamma_{[n]}(B)$ is an additive subgroup by definition, the result follows.
\end{proof}

The following proposition further substantiates the problem proposed by Bonatto and Jedli{\v c}ka whether all commutators, 
coming from a universal algebra setting, are nested binary commutators and whether the latter are immediately satisfied by annihilator nilpotent skew braces.
Furthermore, it allows us to use the series $\Gamma_{[n]}(B)$ in the rest of the paper.

\begin{thm}\label{thm:gamma}
Let $B$ be a skew brace. Then $B$ is annihilator nilpotent if and only if there exists a positive integer $c$ such that $\Gamma_{[c]}(B) = 0$.
\end{thm}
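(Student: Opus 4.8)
The plan is to compare the new series $\Gamma_{[n]}(B)$ with the Bonatto--Jedli{\v c}ka series $\Gamma_n(B)$ and to reduce the statement to Theorem~\ref{thm:bonatto-nilpotent}, which already tells us that $B$ is annihilator nilpotent precisely when $\Gamma_c(B)=0$ for some $c$. Concretely, I would prove the two containments $\Gamma_{n-1}(B)\subseteq\Gamma_{[n]}(B)\subseteq\Gamma_{n-1}(B)$ for all $n$, so that the two series terminate simultaneously; each containment then yields one implication of the theorem.

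First I would establish the easy containment $\Gamma_{n-1}(B)\subseteq\Gamma_{[n]}(B)$ by induction on $n$, with base case $\Gamma_0(B)=B=\Gamma_{[1]}(B)$. For the inductive step, specialising the defining index $i$ of $\Gamma_{[n+1]}(B)$ to $i=1$ and $i=n$ produces exactly the generators $B*\Gamma_{[n]}(B)$, $\Gamma_{[n]}(B)*B$ and $[B,\Gamma_{[n]}(B)]_+$; combining this with the induction hypothesis $\Gamma_{n-1}(B)\subseteq\Gamma_{[n]}(B)$ and the monotonicity of $*$ and $[\,\cdot\,,\cdot\,]_+$ in each argument shows that every generator of $\Gamma_n(B)$ lies in $\Gamma_{[n+1]}(B)$. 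This already gives the implication ``$\Gamma_{[c]}(B)=0$ for some $c$ $\Rightarrow$ $B$ annihilator nilpotent'', since then $\Gamma_{c-1}(B)=0$ and Theorem~\ref{thm:bonatto-nilpotent} applies.

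The reverse containment $\Gamma_{[n]}(B)\subseteq\Gamma_{n-1}(B)$ rests on the following product estimate, which I regard as the heart of the matter: for all $a,b\ge 0$ one has
\[
\Gamma_a(B)*\Gamma_b(B)\subseteq\Gamma_{a+b+1}(B),\qquad \Gamma_b(B)*\Gamma_a(B)\subseteq\Gamma_{a+b+1}(B),\qquad [\Gamma_a(B),\Gamma_b(B)]_+\subseteq\Gamma_{a+b+1}(B).
\]
Granting this, $\Gamma_{[n]}(B)\subseteq\Gamma_{n-1}(B)$ follows by a second induction on $n$: if $\Gamma_{[k]}(B)\subseteq\Gamma_{k-1}(B)$ for all $k<n$, then each generator $\Gamma_{[i]}(B)*\Gamma_{[n-i]}(B)$ and $[\Gamma_{[i]}(B),\Gamma_{[n-i]}(B)]_+$ is contained in $\Gamma_{i-1}(B)*\Gamma_{n-i-1}(B)$ respectively in $[\Gamma_{i-1}(B),\Gamma_{n-i-1}(B)]_+$, and hence in $\Gamma_{(i-1)+(n-i-1)+1}(B)=\Gamma_{n-1}(B)$ by the estimate. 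This yields the implication ``$B$ annihilator nilpotent $\Rightarrow$ $\Gamma_{[c+1]}(B)=0$'', because annihilator nilpotency gives $\Gamma_c(B)=0$ for some $c$ by Theorem~\ref{thm:bonatto-nilpotent}.

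The main obstacle is the product estimate, and I expect to prove its three containments simultaneously by induction on $b$. The base case $b=0$ is immediate from the definition of $\Gamma_{a+1}(B)$. For the inductive step one expands $\Gamma_{b+1}(B)$ through its generators $\Gamma_b(B)*B$, $B*\Gamma_b(B)$ and $[B,\Gamma_b(B)]_+$; since $\Gamma_{a+b+2}(B)$ is an ideal and hence normal in $(B,+)$, Lemma~\ref{lem:id*}(1) shows that $x*(\cdot)$ and $[x,\cdot]_+$ are additive modulo this ideal, so it suffices to treat a single such generator at a time. Each resulting expression is then rewritten using Lemma~\ref{lem:id*}(2) and~(3): the point is that the induction hypothesis already places the ``deep'' factor $x*u\in\Gamma_a(B)*\Gamma_b(B)$ inside $\Gamma_{a+b+1}(B)$, after which one further $*$-multiplication or additive commutator with an element of $B$ lands in $\Gamma_{a+b+2}(B)$ by the very definition of the $\Gamma$-series. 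The genuine difficulty is the bookkeeping forced by the $\lambda$-twists appearing in Lemma~\ref{lem:id*}(2)--(3) and by the fact that $*$-expansions generate additive commutators and vice versa; this is precisely why the three containments must be carried through the induction together rather than separately.
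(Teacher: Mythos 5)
Your easy direction is correct and is exactly the paper's one-line observation (the paper notes $\Gamma_n(B)\subseteq\Gamma_{[n]}(B)$, obtained just as you do by specialising $i=1$ and $i=n$ in the definition). The hard direction, however, has a genuine gap: the entire argument rests on your ``product estimate'' $\Gamma_a(B)*\Gamma_b(B)\subseteq\Gamma_{a+b+1}(B)$ (with its two companions), which you assert but only sketch, and the sketch does not close. This estimate is the skew-brace analogue of the N-series property $[\gamma_a(G),\gamma_b(G)]\subseteq\gamma_{a+b}(G)$ of the lower central series of a group, whose proof requires the Hall--Witt identity and the three subgroups lemma; no analogue of that identity is available from Lemma~\ref{lem:id*}. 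Concretely, in your inductive step you must handle $x*(u*w)$ with $x\in\Gamma_a(B)$, $u\in\Gamma_b(B)$, $w\in B$, and the only applicable tool, Lemma~\ref{lem:id*}(3), rewrites it as $x*(u*w)=(x\circ u)*w-x*w-u*w$. The ``deep factor'' that appears is the multiplicative product $x\circ u$, not the $*$-product $x*u$: since $x\circ u$ lies only in the ideal $\Gamma_{\min(a,b)}(B)$, the definition of the $\Gamma$-series places $(x\circ u)*w$ merely in $\Gamma_{\min(a,b)+1}(B)$, nowhere near $\Gamma_{a+b+2}(B)$. Your claim that one further $*$-multiplication ``lands in $\Gamma_{a+b+2}(B)$ by the very definition of the $\Gamma$-series'' is thus incorrect: the definition bumps the index by exactly one per $*$ or commutator \emph{with all of $B$}, and the expansions from Lemma~\ref{lem:id*}(2)--(3) never produce the term $x*u$ in the left slot of a $*$-product. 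The same obstruction arises for $x*(w*u)$ and for the mixed additive-commutator cases, and trying to substitute $x\circ u=x+(x*u)+u$ back into identity (2) just reproduces identity (3) circularly.

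The paper's proof of necessity is organised precisely to avoid this estimate. It proceeds by induction showing that $\Gamma_{n+1}(B)=0$ implies $\Gamma_{[2^{n+1}]}(B)=0$: each generator $a_i*b_i$ or $[a_i,b_i]_+$ of $\Gamma_{[2^{n+1}]}(B)$ has $a_i\in\Gamma_{[q_i]}(B)$ and $b_i\in\Gamma_{[2^{n+1}-q_i]}(B)$ with at least one of $q_i$, $2^{n+1}-q_i$ at least $2^n$, so applying the inductive hypothesis to the quotient $B/\Gamma_n(B)$ (which satisfies $\Gamma_n(B/\Gamma_n(B))=0$) forces the corresponding factor into $\Gamma_n(B)$, whence the whole generator lies in $\Gamma_{n+1}(B)=0$. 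This yields only the exponential bound $c\leq 2^{n+1}$ rather than your claimed identification $\Gamma_{[n]}(B)=\Gamma_{n-1}(B)$, but it needs nothing beyond the definitions and Theorem~\ref{thm:bonatto-nilpotent}. Note that your linear claim, if true, would be substantially stronger than the theorem and would bear directly on the nested-binary-commutator problem of Bonatto and Jedli{\v c}ka that the paper flags immediately before this theorem --- a further indication that it cannot be dispatched as ``bookkeeping.'' To repair your argument, either supply a genuinely new Hall--Witt-type identity for $*$ that proves the product estimate, or switch to the paper's quotient-and-doubling induction.
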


\begin{proof}
The sufficiency is obvious as $\Gamma_n(B) \subseteq \Gamma_{[n]}(B)$ for all $n$.

We prove necessity by induction. If $\Gamma_{2}(B)=0$, then $B$ is a trivial brace. Hence $\Gamma_{[2]}(B) = 0$, which proves the base case.
Suppose that we have shown that $\Gamma_{\left[2^k\right]}(B)=0$ for all skew braces $B$ with $\Gamma_{k}(B)=0$ for $k\leq n$.

Consider a skew brace $B$ such that $\Gamma_{n+1}(B)=0$. Let $a \in \Gamma_{\left[2^{n+1}\right]}(B)$. 
Then $a$ is a sum of elements of the form $a_i*b_i$ 
and $\left\lbrack a_i, b_i \right\rbrack_+$ with $a_i \in \Gamma_{[q_i]}(B)$ and $b_i\in \Gamma_{\left[2^{n+1}-q_i\right\rbrack}(B)$. Consider the cases where $q_i \geq 2^{n}$. Then applying the induction hypothesis on $B/\Gamma_{n}(B)$, we obtain that $a_i\in \Gamma_n(B)$. 
Hence both $a_i*b_i \in \Gamma_{n+1}(B)=0$ and $\left\lbrack a_i, b_i \right\rbrack_+\in \Gamma_{n+1}(B)=0$, 
showing that these terms are all zero. Remain the cases where $q_i<2^n$. 
However, applying the same reasoning on $b_i \in \Gamma_{[2^{n+1}-q_i]}(B)\subseteq \Gamma_{\left\lbrack 2^{n}\right\rbrack}(B)$ 
we obtain that also all these terms are zero. Hence $a=0$.
\end{proof}

\section{Annihilator nilpotent skew braces with ACC on sub skew braces}
\label{noetherian}

A skew brace is said to have  the ascending chain condition (ACC) on sub skew braces if it satisfies the ascending chain condition on sub skew braces. 
In this section we show that for annihilator nilpotent skew braces this condition is equivalent with being finitely generated as skew braces, or equivalent with its additive (respectively multiplicative) group being finitely generated.
The necessity of these conditions is shown in the following lemma.

\begin{lem}\label{lem:noetherianannnilp}
Let $B$ be a skew brace. If $B$ has ACC on sub skew braces and is annihilator nilpotent, then the following properties are equivalent:
\begin{enumerate}
    \item The skew brace $B$ is finitely generated.
    \item The group $(B,+)$ is finitely generated.
    \item The group $(B,\circ)$ is finitely generated.
\end{enumerate}
\end{lem}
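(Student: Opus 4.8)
The plan is to dispose of the easy directions first and then concentrate on the two substantive implications. The implications $(2)\Rightarrow(1)$ and $(3)\Rightarrow(1)$ are immediate: any finite set generating $(B,+)$ or $(B,\circ)$ as a group also generates $B$ as a skew brace. So the heart of the matter is to establish $(1)\Rightarrow(2)$ and $(1)\Rightarrow(3)$, for which I would induct on the annihilator length $c$ of $B$, i.e. the least integer with $\Ann_c(B)=B$.

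Two standard facts would be recorded before the induction. First, the ACC on sub skew braces is equivalent to the maximal condition, and therefore forces every sub skew brace of $B$---in particular $\Ann(B)$---to be finitely generated as a skew brace. Second, both hypotheses descend to the quotient $B/\Ann(B)$: sub skew braces of $B/\Ann(B)$ correspond to sub skew braces of $B$ containing $\Ann(B)$, so ACC is inherited; annihilator nilpotency passes to epimorphic images; and the annihilator length of $B/\Ann(B)$ equals $c-1$.

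For the base case $c\le1$ the skew brace coincides with its annihilator, hence is a trivial brace: the two operations agree, sub skew braces are just subgroups of $(B,+)$, and $(B,+)=(B,\circ)$ is an abelian group which is finitely generated as a group exactly when it is finitely generated as a skew brace. For the inductive step set $A=\Ann(B)$. By the maximal condition $A$ is finitely generated as a skew brace, and since $A$ is a trivial central brace this means $(A,+)=(A,\circ)$ is a finitely generated abelian group. The quotient $B/A$ is annihilator nilpotent of length $c-1$, has ACC, and is finitely generated as a skew brace (being a quotient of $B$), so the induction hypothesis yields that $(B/A,+)$ and $(B/A,\circ)$ are finitely generated. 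As $A$ is an ideal it is a normal subgroup of both $(B,+)$ and $(B,\circ)$, and I would finish with the elementary fact that an extension of a finitely generated group by a finitely generated group is finitely generated, applied to $0\to(A,+)\to(B,+)\to(B/A,+)\to0$ and to $1\to(A,\circ)\to(B,\circ)\to(B/A,\circ)\to1$.

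The genuinely load-bearing step, rather than routine bookkeeping, is the finite generation of the central piece $\Ann(B)$. In the group-theoretic template that this lemma generalizes (finitely generated nilpotent groups are Noetherian) this comes for free, since the last term of the lower central series is generated by finitely many commutators; here I instead extract it from the ACC hypothesis, and it is precisely the triviality of the brace $\Ann(B)$ that lets me translate ``finitely generated as a skew brace'' into ``finitely generated as an abelian group.'' Everything else---the descent of the three hypotheses to $B/\Ann(B)$ and the extension argument for groups---is standard.
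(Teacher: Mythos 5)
Your proof is correct and follows essentially the same route as the paper: both arguments use the ACC to get that $\Ann(B)$ (and each factor $\Ann_{i+1}(B)/\Ann_i(B)$, via the quotients $B/\Ann_i(B)$) is finitely generated as a skew brace, use triviality of these braces to convert this into finite generation as groups, and then conclude for $(B,+)$ and $(B,\circ)$ by the extension argument, with the easy converses handled identically. Your induction on the annihilator length is merely a repackaging of the paper's direct pass along the annihilator series, so there is no substantive difference.
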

\begin{proof}
Let $B$ be finitely generated. Consider the annihilator series $$ 0 \subseteq \Ann(B) \subseteq ... \subseteq \Ann_k(B) = B.$$ As $B$ has ACC on sub skew braces, the ideal $\Ann(B)$ is finitely generated as a skew brace. As the latter is a trivial brace, this implies that it is both additively and multiplicatively finitely generated. Similarly, 
\[
\Ann_{i+1}(B)/\Ann_{i}(B)
\]
is a finitely generated trivial brace. Hence, every factor of the annihilator series is finitely generated both as additive and multiplicative group, which implies that $B$ is finitely generated as additive and multiplicative group.

Clearly, if $(B,+)$ or $(B,\circ)$ are finitely generated, then $B$ is finitely generated, showing the result.
\end{proof}

In the following lemma, we often use, without mention, the following identity (stated in Lemma~\ref{lem:id*}.(3)) that holds in any skew brace
 $$ (a\circ b) * c = a*(b*c) +b*c +a*c.$$

\begin{lem}\label{lem:gammaminus2}
Let $B$ be an annihilator nilpotent skew brace with $\Gamma_{[c]}(B)=0$. If $a \in \Gamma_{\left[c-k\right]}(B)$ and $q,w \in \Gamma_{[k-1]}$, then the following statements hold:
\begin{enumerate}
\item $a*(q + w) = a*q + a*w$,
\item $a*(q\circ w) = a*q + a*w$,
\item $(q\circ w) *a = q*a + w*a$,
\item $(q+w)*a = q*a + w*a$,
\item $\left\lbrack a,q+w\right\rbrack_+ = \left\lbrack a,q\right\rbrack_{+} + \left\lbrack a, w \right\rbrack_{+}$,
\item $\left\lbrack a,q\circ w\right\rbrack_+ = \left\lbrack a,q\right\rbrack_{+} + \left\lbrack a, w \right\rbrack_{+}$.
\end{enumerate}
\end{lem}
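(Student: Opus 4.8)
The plan is to exploit the fact that the ideals $\Gamma_{[n]}(B)$ form a filtration. I would first record, straight from the definition, that $\Gamma_{[p]}(B)*\Gamma_{[s]}(B)\subseteq\Gamma_{[p+s]}(B)$ and $[\Gamma_{[p]}(B),\Gamma_{[s]}(B)]_+\subseteq\Gamma_{[p+s]}(B)$ for all $p,s\ge1$, together with $\Gamma_{[m]}(B)=0$ once $m\ge c$. Since the hypothesis $q,w\in\Gamma_{[k-1]}(B)$ presupposes $k\ge2$, the \emph{principal} outputs $a*q,\,a*w,\,q*a,\,w*a,\,[a,q]_+,\,[a,w]_+$ all lie in $\Gamma_{[c-1]}(B)$, while every correction term that appears below will lie in some $\Gamma_{[m]}(B)$ with $m\ge c$ and hence vanish. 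I would also note that each $\Gamma_{[m]}(B)$, being an ideal, is stable under every $\lambda_g$ and $\lambda_g^{-1}$, and that any element of $\Gamma_{[c-1]}(B)$ commutes additively with any element of $\Gamma_{[m]}(B)$ ($m\ge1$), since their additive commutator lies in $\Gamma_{[c-1+m]}(B)=0$.

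The technical heart, which I would isolate as an auxiliary claim, is that the $\lambda$-distortions created when translating between $+$ and $\circ$ are filtration-negligible: for $u\in\Gamma_{[k-1]}(B)$ and any $g\in B$, I expect $\lambda_g(u)*a=u*a$ and $a*\lambda_g(u)=a*u$, with the same for $\lambda_g^{-1}$. To see this I would write $\lambda_g(u)=u+g*u$ with $g*u\in\Gamma_{[k]}(B)$, then apply Lemma~\ref{lem:id*}.(2) to $(u+g*u)*a$ and Lemma~\ref{lem:id*}.(1) to $a*(u+g*u)$; in each case the only surviving correction is a product of a $\Gamma_{[k]}(B)$-element with $a\in\Gamma_{[c-k]}(B)$, which lands in $\Gamma_{[c]}(B)=0$. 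Substituting $u\mapsto\lambda_g^{-1}(u)$ gives the inverse versions. This is precisely where $k\ge2$ is essential, since it is what forces the corrections into $\Gamma_{[m]}(B)$ with $m\ge c$; I anticipate this to be the main obstacle, everything else being bookkeeping of filtration degrees.

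Granting these tools, I would dispatch (1), (3) and (4) using the raw identities of Lemma~\ref{lem:id*}. For (1), Lemma~\ref{lem:id*}.(1) gives $a*(q+w)=a*q+q+a*w-q$, and the additive commuting of $a*w\in\Gamma_{[c-1]}(B)$ with $q$ collapses $q+a*w-q$ to $a*w$. For (3), Lemma~\ref{lem:id*}.(3) gives $(q\circ w)*a=q*(w*a)+w*a+q*a$, where $q*(w*a)\in\Gamma_{[c+k-2]}(B)=0$ and the two remaining $\Gamma_{[c-1]}(B)$-terms commute. For (4), Lemma~\ref{lem:id*}.(2) gives $(q+w)*a=q*(\lambda_q^{-1}(w)*a)+\lambda_q^{-1}(w)*a+q*a$; the first summand vanishes as in (3), the twist claim rewrites $\lambda_q^{-1}(w)*a$ as $w*a$, and commuting yields $q*a+w*a$.

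Finally I would obtain (2), (5) and (6) by moving the multiplicative product to the additive side via $q\circ w=q+\lambda_q(w)$ and reusing the additive cases. For (2), I would split $a*(q+\lambda_q(w))$ by (1) and apply the twist claim to turn $a*\lambda_q(w)$ into $a*w$. For (5), the nonabelian commutator identity $[a,q+w]_+=[a,q]_++(q+[a,w]_+-q)$, together with the additive commuting of $[a,w]_+\in\Gamma_{[c-1]}(B)$ with $q$, gives $[a,q]_++[a,w]_+$. For (6), I would split $[a,q+\lambda_q(w)]_+$ by (5) and then apply (5) once more with $\lambda_q(w)=w+q*w$, discarding $[a,q*w]_+\in\Gamma_{[c+k-2]}(B)=0$, to identify $[a,\lambda_q(w)]_+$ with $[a,w]_+$. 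Throughout, the whole argument reduces to writing each identity through Lemma~\ref{lem:id*} and the filtration and then deleting every term of degree at least $c$; the only genuinely delicate point is the twist claim.
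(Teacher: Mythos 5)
Your proposal is correct in substance, but on the key point it takes a genuinely different route from the paper. Parts (1), (3) and (5) coincide with the paper's argument (delete every term of filtration degree at least $c$, and freely reorder the surviving terms, which lie in $\Gamma_{[c-1]}(B)\subseteq\Ann(B)$). The divergence is in how the $\lambda$-twists are removed in (4). The paper rewrites $q+w=q\circ\lambda_{\overline{q}}(w)$, applies Lemma~\ref{lem:id*}.(3), and then runs an iterative descent: setting $q_0=\overline{q}$ and $q_{n+1}=q_n*w$, it shows $\lambda_{q_n}(w)*a=\lambda_{q_{n+1}}(w)*a$ and lets the twisting element fall down the filtration until it vanishes ($q_c=0$), whence $\lambda_{\overline{q}}(w)*a=w*a$. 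You instead isolate a one-shot twist lemma, $\lambda_g^{\pm1}(u)*a=u*a$ and $a*\lambda_g^{\pm1}(u)=a*u$ for $u\in\Gamma_{[k-1]}(B)$ and arbitrary $g\in B$, proved by a single application of Lemma~\ref{lem:id*} with $u$ in the first slot and the $\Gamma_{[k]}(B)$-correction in the second: then $\lambda_u^{-1}(y)*a\in\Gamma_{[k]}(B)*\Gamma_{[c-k]}(B)=0$ kills both correction terms at once, and no iteration is needed. This also lets you derive (2) and (6) from $q\circ w=q+\lambda_q(w)$ plus the twist lemma, where the paper expands $q\circ w=q+q*w+w$ and applies (1) resp.\ (5) directly; that difference is cosmetic. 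Your version is arguably cleaner and more reusable; what the paper's iteration buys is that it never has to reorder an additive sum involving the correction term, which is exactly where your write-up has its one slip.

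Namely, $\lambda_g(u)=u+g*u$ is wrong in general: by definition $g*u=\lambda_g(u)-u$, so $\lambda_g(u)=g*u+u$, and since $(B,+)$ need not be abelian the two differ by $[u,-u+g*u+u]_+$-type commutators lying only in $\Gamma_{[2k-1]}(B)$, which need not vanish when $2k-1<c$. The repair is immediate and does not change your argument: $\Gamma_{[k]}(B)$ is an ideal, hence a normal subgroup of $(B,+)$, so $\lambda_g(u)=u+y$ with $y=-u+g*u+u\in\Gamma_{[k]}(B)$, and your proof of the twist lemma uses only that the correction lies in $\Gamma_{[k]}(B)$, never its exact value. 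The same substitution is needed in (6), where you write $\lambda_q(w)=w+q*w$; with $y\in\Gamma_{[k]}(B)\subseteq\Gamma_{[k-1]}(B)$ your application of (5) and the vanishing $[a,y]_+\in\Gamma_{[c]}(B)=0$ go through unchanged. With that one correction the proof is complete.
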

\begin{proof}
$(1)$ Note that $$a*(q+w) = a*q + q + a*w -q = a*q + \left\lbrack q, a*w \right\rbrack_{+} +a*w.$$ As $a \in \Gamma_{[c-k]}(B)$ and $a*w \in \Gamma_{[c-1]}(B)$, we have that $ \left\lbrack q, a*w \right\rbrack_{+}\in \Gamma_{[c]}=0$ 
and thus  one obtains that  $$a*(q + w) = a*q + a*w,$$ showing the first identity.

$(2)$ It easily is verified that $$a*(q \circ w) = a* (q + q*w + w).$$ Hence, by (1), 
$$ a*(q\circ w)= a*q + a*(q*w) + a*w.$$ As $a*(q*w) \in \Gamma_{[c-k]}(B)*\Gamma_{[k]}(B)=0$, it follows that  $a*(q\circ w)=a*q+a*w$.

$(3)$ Note that $$(q\circ w)*a = q*(w*a)+w*a+q*a.$$
 As $q*(w*a) \in \Gamma_{[c]}(B)$ and $w*a \in Z(B,+)$, it follows that this is equal to $q*a+w*a$.

$(4)$ Because  $a\circ b = a+\lambda_{a}(b)$, we obtain that $$ (q+w)*a = (q \circ \lambda_{\overline{q}}(w))*a.$$ 
Using identity $(3)$, we get that  $(q+w)*a =q*a + \lambda_{\overline{q}}(w)*a$. Applying this again on the last term, we obtain $$\lambda_{\overline{q}}(w)*a = (\overline{q}*w+w)*a = (\bar{q}*w)*a + \lambda_{\overline{q}*w}(w)*a.$$ As $(\bar{q}*w)*a \in \Gamma_{[k]}(B)*\Gamma_{[c-k]}(B)=0$, we obtain that $\lambda_{\overline{q}}(w)*a=\lambda_{\overline{q}*w}(w)*a$. Denoting $q_0=\overline{q}$ and $q_{n+1}=q_n*w$, we obtain inductively that \[
\lambda_{q_n}(w)*a = \lambda_{q_{n+1}}(w)*a.
\]
As $q_n \in B^n$, it follows that $q_c=0$. Hence, we have shown that $$(q+w)*a = q*a + \lambda_{q_c}(w)*a=q*a+w*a.$$

$(5)$ This follows easily from the fact that $[a,w]_+\in Z(B,+)$.

$(6)$ This follows from  $(5)$ and the fact that 
\[
[a,q*w]_+ \in [\Gamma_{[c-k]}(B),\Gamma_{k}(B)]_+=0.
\]
This completes the proof.
\end{proof}

\begin{thm}\label{thm:noetherian}
Let $B$ be an annihilator nilpotent skew brace. If $B$ is finitely generated (as a skew brace), then $B$ has ACC on sub skew braces.
\end{thm}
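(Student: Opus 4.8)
The plan is to prove that every sub skew brace of $B$ is itself finitely generated, since ACC on sub skew braces is equivalent to every sub skew brace being finitely generated (a standard fact: if some sub skew brace were not finitely generated, one could build a strictly ascending chain, and conversely ACC forces finite generation by the usual maximality argument). So the task reduces to showing that the property ``every sub skew brace is finitely generated'' holds for finitely generated annihilator nilpotent skew braces.

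\medskip

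The natural approach is induction on the annihilator length $c$, i.e. on the least $c$ with $\Gamma_{[c]}(B)=0$ (equivalently $\Ann_c(B)=B$), using Theorem~\ref{thm:gamma}. For the base case $B$ is a trivial brace, hence both groups coincide and equal a finitely generated abelian group; finitely generated abelian groups are Noetherian, so ACC holds. For the inductive step I would exploit the ideal $\Gamma_{[c-1]}(B)$, which lies inside $\Ann(B)$ (by the defining property of the $\Gamma_{[n]}$ series this last nonzero term is central), and pass to the quotient $\overline{B}=B/\Gamma_{[c-1]}(B)$. This quotient is annihilator nilpotent of strictly smaller length and is finitely generated, so by induction $\overline{B}$ has ACC on sub skew braces. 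The key point is then to control a sub skew brace $C\leq B$ via the two pieces $C\cap \Gamma_{[c-1]}(B)$ and the image $\overline{C}$ in $\overline{B}$: the image is finitely generated by induction, and the intersection sits inside the trivial (central) brace $\Gamma_{[c-1]}(B)$.

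\medskip

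The hard part will be showing that $C\cap \Gamma_{[c-1]}(B)$ is finitely generated. Since $\Gamma_{[c-1]}(B)$ is a central trivial brace, it is an abelian group under $+$ (with $\circ$ coinciding), so it suffices to show this abelian group is Noetherian, i.e. finitely generated. This does \emph{not} follow merely from $B$ being finitely generated, because subgroups of a finitely generated central factor need not be finitely generated unless that factor is itself a finitely generated abelian group. So the real content is to prove that $\Gamma_{[c-1]}(B)$ is finitely generated as an abelian group. Here I would use Lemma~\ref{lem:gammaminus2}: for $a$ ranging over generators of $B$ and $q,w$ ranging over $\Gamma_{[c-2]}(B)$, the identities show that the operations $*$ and the additive commutator $[\,\cdot\,,\cdot\,]_+$ are \emph{biadditive} when one argument lands in the top central layer. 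Consequently $\Gamma_{[c-1]}(B)$, being generated additively by expressions $x*y$ and $[x,y]_+$ with arguments in lower terms of the series, is generated as an abelian group by the finitely many such expressions built from a fixed finite generating set of $B$ (the biadditivity lets one reduce arbitrary generators to the generating set). This finiteness then propagates: $C\cap\Gamma_{[c-1]}(B)$ is a subgroup of a finitely generated abelian group, hence finitely generated, and combined with the finitely generated image $\overline{C}$ one concludes $C$ is finitely generated as a skew brace.

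\medskip

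To assemble the conclusion, given a sub skew brace $C$, I would choose finitely many elements of $C$ whose images generate $\overline{C}$ together with a finite generating set of the abelian group $C\cap\Gamma_{[c-1]}(B)$; these finitely many elements generate $C$ as a skew brace, since any element of $C$ differs from a skew-brace word in the chosen lifts by an element of $C\cap\Gamma_{[c-1]}(B)$, which is covered by the remaining generators. As every sub skew brace is finitely generated, $B$ satisfies ACC on sub skew braces, completing the induction and the proof.
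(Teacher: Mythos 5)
Your proposal follows essentially the same route as the paper's proof: induction on the least $c$ with $\Gamma_{[c]}(B)=0$, the observation that $\Gamma_{[c-1]}(B)\subseteq\Ann(B)$, splitting a sub skew brace into its image in $B/\Gamma_{[c-1]}(B)$ (handled by the inductive ACC) and its intersection with the central trivial brace $\Gamma_{[c-1]}(B)$, and using the biadditivity identities of Lemma~\ref{lem:gammaminus2} to prove that $\Gamma_{[c-1]}(B)$ is a finitely generated abelian group. The one point to tighten is your parenthetical reduction ``to a fixed finite generating set of $B$'': since the identities of Lemma~\ref{lem:gammaminus2} hold only when the products land in the top (vanishing) layer, you cannot expand all the way down to generators of $B$; instead, exactly as the paper does, you should take finite multiplicative and additive generating sets of the factors $\Gamma_{[i]}(B)/\Gamma_{[c-1]}(B)$ --- available because the inductive ACC on $B/\Gamma_{[c-1]}(B)$ makes these ideals finitely generated as skew braces, and Lemma~\ref{lem:noetherianannnilp} then makes them finitely generated both additively and multiplicatively --- and expand each $\alpha*\beta$ and $[\alpha,\beta]_+$ over those finitely many generators.
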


\begin{proof}
By Theorem \ref{thm:gamma}, we can proceed by induction on the positive integer $c$ such that $\Gamma_{[c]}(B)=0$. If this integer is $1$, then $B$ is a trivial skew brace of abelian type. In particular, the result becomes a well-known theorem on finitely generated abelian groups.

Suppose we have shown that every annihilator nilpotent skew brace $B$ with $\Gamma_{[n]}(B)=0$ has ACC on sub skew braces.
Consider $B$ an annihilator nilpotent skew brace such that $\Gamma_{[n+1]}(B)=0$. 
Thus, $\Gamma_{[n]}(B) \subseteq \Ann(B)$. Let $H$ be a sub skew brace of $B$. 
Then $H/ (H \cap \Gamma_{[n]}(B))$ is finitely generated by applying the 
induction hypothesis on $B/\Gamma_{[n]}(B)$. It remains to show that 
$H \cap \Gamma_{[n]}(B)$ is finitely generated as a skew brace. As $\Gamma_{[n]}(B) \subseteq \Ann(B)$, it is sufficient to show that $\Gamma_{[n]}(B)$ is finitely generated as a skew brace.

Let $1 \leq k \leq n-1$ and consider $\Gamma_{[k]}(B) * \Gamma_{[n-k]}(B)$. Then, by the induction hypothesis, both $\Gamma_{[k]}(B)$ and $\Gamma_{[n-k]}(B)$ are finitely generated and have ACC on sub skew braces modulo $\Gamma_{[n]}(B)$. By Lemma \ref{lem:noetherianannnilp}, the multiplicative group of the factor $\Gamma_{[k]}(B)/\Gamma_{[n]}(B)$ is finitely generated, say by the natural image of the set $A=\left\lbrace a_1,...,a_r\right\rbrace$ and the additive group of the factor $\Gamma_{[n-k]}(B)/\Gamma_{[n]}(B)$ is finitely generated, say by the natural image of the set $Y=\left\lbrace y_1,...,y_s\right\rbrace$. Without loss of generality, we may assume that the set $Y$ (resp. $A$) is closed under taking additive (resp. multiplicative) inverses. Let $\alpha \in \Gamma_{[k]}(B)$ and $\beta \in \Gamma_{[n-k]}(B)$. Then there exist $s_1,s_2 \in \Gamma_{[n]}(B)$,
$a_{j_1},\dots,a_{j_f}\in A$ and 
$y_{l_1},\dots,y_{l_g}\in Y$ 
such that $\alpha = s_1 \circ a_{j_1}\circ\cdots a_{j_f}$ and $\beta = s_2 + y_{l_1}+\cdots+y_{l_g}$ . Note that 
\begin{align*}
\alpha * \beta &= 
(s_1 \circ a_{j_1}\circ\cdots a_{j_f})*(s_2 + y_{l_1}+\cdots+y_{l_g})\\
&=(a_{j_1}\circ\cdots a_{j_f})*(y_{l_1}+\cdots+y_{l_g})
\end{align*}
where the last equality holds as $s_1,s_2 \in \Gamma_{[n]}(B) \subseteq \Ann(B)$. Applying parts (1) and (3) of Lemma \ref{lem:gammaminus2}, this is further equal to a sum of elements of the form $a_{j_i}*y_{l_j}$. This shows that the additive group $\Gamma_{[k]}(B)*\Gamma_{[n-k]}(B)$ is finitely generated.

Completely analogously, applying $(5)$ and $(6)$ of Lemma \ref{lem:gammaminus2}, one sees that the additive group $\left\lbrack\Gamma_{[k]}(B), \Gamma_{[n-k]}(B)\right\rbrack_+$ is finitely generated. As, by definition, $\Gamma_{[n]}(B)$ is the additive subgroup generated by $\Gamma_{[k]}(B)*\Gamma_{[n-k]}(B)$ and $\left\lbrack \Gamma_{[k]}(B),\Gamma_{[n-k]}(B)\right\rbrack_+$ for 
all $k\in\{1,\dots,n-1\}$, it follows that $\Gamma_{[n]}(B)$ is finitely generated.
\end{proof}

\begin{corollary}
Let $B$ be an annihilator nilpotent skew brace. Then the following statements are equivalent:
\begin{enumerate}
    \item The skew brace $B$ is finitely generated (as a skew brace).
    \item The group $(B,+)$ is finitely generated.
    \item The group $(B,\circ)$ is finitely generated.
\end{enumerate}
\end{corollary}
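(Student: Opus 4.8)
The plan is to deduce this equivalence purely by combining the two principal results already established in this section, namely Lemma~\ref{lem:noetherianannnilp} and Theorem~\ref{thm:noetherian}, together with the elementary observation that any generating set of either additive group $(B,+)$ or multiplicative group $(B,\circ)$ is automatically a generating set of $B$ as a skew brace. No further computation beyond these ingredients should be required.

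First I would dispatch the easy implications $(2)\Rightarrow(1)$ and $(3)\Rightarrow(1)$. If $(B,+)$ is generated as a group by a finite set $S$, then the sub skew brace of $B$ generated by $S$ already contains the additive subgroup generated by $S$, which is all of $B$; hence that sub skew brace equals $B$ and $S$ generates $B$ as a skew brace. The identical argument, replacing $+$ by $\circ$, handles $(3)\Rightarrow(1)$. Neither of these implications uses annihilator nilpotency, and both were already noted at the end of the proof of Lemma~\ref{lem:noetherianannnilp}.

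The substance lies in the implications $(1)\Rightarrow(2)$ and $(1)\Rightarrow(3)$. Here I would assume $B$ is finitely generated as a skew brace. Since $B$ is annihilator nilpotent by hypothesis, Theorem~\ref{thm:noetherian} applies and yields that $B$ satisfies the ascending chain condition on sub skew braces. At this point $B$ meets both running hypotheses of Lemma~\ref{lem:noetherianannnilp}, namely it has ACC on sub skew braces and is annihilator nilpotent, so that lemma supplies the equivalence of its three conditions, which are precisely conditions $(1)$, $(2)$ and $(3)$ of the present statement. As $(1)$ holds by assumption, so do $(2)$ and $(3)$, which closes the cycle.

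I expect no genuine obstacle, since all the real work has been front-loaded into Theorem~\ref{thm:noetherian}. The only point meriting a moment of care is to verify that the hypothesis ``finitely generated as a skew brace'' in Theorem~\ref{thm:noetherian} is literally condition $(1)$, so that the chain $(1)\Rightarrow\text{ACC}\Rightarrow\text{Lemma~\ref{lem:noetherianannnilp}}$ proceeds without any circular appeal back to conditions $(2)$ or $(3)$.
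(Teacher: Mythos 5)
Your proof is correct, and it is logically tighter than the one in the paper, though it runs on the same two engines. The paper proves $(1)\Rightarrow(2)$ and $(1)\Rightarrow(3)$ by a fresh induction on the length of the annihilator series: at the top step it invokes Theorem~\ref{thm:noetherian} to get ACC, deduces that $\Ann(B)$ is a finitely generated trivial skew brace (hence finitely generated as a group under either operation), and passes to $B/\Ann(B)$ inductively. You instead invoke Theorem~\ref{thm:noetherian} exactly once to get ACC for $B$ itself and then cite Lemma~\ref{lem:noetherianannnilp} wholesale, whose stated conclusion is literally the equivalence of $(1)$, $(2)$ and $(3)$ under the hypotheses ``ACC on sub skew braces and annihilator nilpotent.'' In effect, the paper's induction re-derives inside the corollary the content that Lemma~\ref{lem:noetherianannnilp} already packaged (the lemma's own proof climbs the annihilator series in just the way the paper's corollary proof does), so your composition Theorem~\ref{thm:noetherian} $+$ Lemma~\ref{lem:noetherianannnilp} exposes the corollary as a purely formal consequence of results already on record. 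Your caution about circularity is the right thing to check and it does check out: the proof of Theorem~\ref{thm:noetherian} uses Lemma~\ref{lem:noetherianannnilp} but never the corollary, and the lemma is proved independently of both, so the chain $(1)\Rightarrow\text{ACC}\Rightarrow(2),(3)$ is sound. The easy converses are handled identically in both arguments (any group generating set of $(B,+)$ or $(B,\circ)$ generates $B$ as a skew brace, with no use of nilpotency). What the paper's version buys is a self-contained account of how finite generation propagates along the annihilator series; what yours buys is brevity and a cleaner dependency graph among the section's results.
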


\begin{proof}
    We prove $(1)\implies(2)$ and $(1)\implies(3)$, as the converses are trivial. 
    We proceed by induction on the length of the annihilator series of $B$. 
    If the length is one, then the skew brace is trivial and the claim follows. Let us assume the result 
    holds for length $k\geq1$. Let $B$ be a skew brace with an annihilator series of length $k+1$. 
    By Theorem \ref{thm:noetherian}, $B$ has ACC on sub skew braces. In particular,
    $\Ann(B)$ is finitely generated (as a skew brace). As $\Ann(B)$ is a trivial skew brace, the group 
    $(\Ann(B),+)=(\Ann(B),\circ)$ is finitely generated. The quotient $B/\Ann(B)$ is annihilator nilpotent
    of length $k$ and finitely generated (as a skew brace). Thus the claim follows from the 
    inductive hypothesis.
\end{proof}

\begin{cor}
\label{cor:Hopfian}
Let $B$ be a finitely generated skew brace that is annihilator nilpotent. Then $B$ is Hopfian, i.e. every surjective skew brace endomorphism of $B$ is an isomorphism.
\end{cor}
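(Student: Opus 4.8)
The plan is to combine Theorem~\ref{thm:noetherian} with the standard Noetherian argument for the Hopf property. Since $B$ is finitely generated and annihilator nilpotent, Theorem~\ref{thm:noetherian} guarantees that $B$ satisfies the ascending chain condition on sub skew braces, and in particular on ideals. So let $f\colon B\to B$ be a surjective skew brace endomorphism; the goal is to prove that $\Ker f=0$. Two preliminary observations are needed. First, in any skew brace the additive and multiplicative identities coincide, so $\Ker f=\{x\in B:f(x)=0\}$ is unambiguous and is an ideal of $B$ by the isomorphism theorems; in particular it is a sub skew brace to which the chain condition applies. Second, as $f$ is surjective, every iterate $f^n$ is surjective as well.

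Next I would examine the chain $\Ker f\subseteq\Ker f^2\subseteq\cdots$. It is ascending, since $f^n(x)=0$ forces $f^{n+1}(x)=f(f^n(x))=f(0)=0$. By the ACC there is a positive integer $n$ with $\Ker f^n=\Ker f^{n+1}$. To conclude injectivity, take $x\in\Ker f$ and, using the surjectivity of $f^n$, write $x=f^n(y)$ for some $y\in B$. Then $f^{n+1}(y)=f(x)=0$, so $y\in\Ker f^{n+1}=\Ker f^n$, and hence $x=f^n(y)=0$. Thus $\Ker f=0$, so $f$ is bijective; since a bijective homomorphism of skew braces has a homomorphic inverse, $f$ is an isomorphism.

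I do not expect a serious obstacle here: once Theorem~\ref{thm:noetherian} supplies ACC on sub skew braces, the argument is essentially forced and mirrors the classical proof that finitely generated nilpotent groups are Hopfian. The only points that require a little care are verifying that kernels of skew brace endomorphisms are ideals, so that the chain condition genuinely applies to the chain under consideration, and confirming that this chain of kernels is ascending; both are immediate from the isomorphism theorems and the fact that $f(0)=0$.
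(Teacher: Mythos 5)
Your proof is correct, but it takes a genuinely different route from the paper's printed argument. You run the standard Noetherian chain argument: Theorem~\ref{thm:noetherian} supplies ACC on sub skew braces, the kernels $\Ker f\subseteq\Ker f^2\subseteq\cdots$ form an ascending chain of ideals (hence of sub skew braces, so the chain condition applies, as you correctly verify), stabilization $\Ker f^n=\Ker f^{n+1}$ combined with the surjectivity of $f^n$ forces $\Ker f=0$, and a bijective homomorphism of skew braces is an isomorphism. This is precisely the alternative the authors themselves flag in the remark immediately following the corollary, where they note that the result ``can also be proven as a direct consequence of the ACC property, similar to standard proofs in group and module theory.'' The paper's own proof instead reduces to group theory: the induced endomorphism $\overline{f}\colon B/B^2\to B/B^2$ is a surjective endomorphism of a finitely generated nilpotent group, which is Hopfian, so $\Ker f\subseteq B^2$; since $f$ restricts to a surjection $B^2\to B^2$ and $B^2$ is finitely generated by Theorem~\ref{thm:noetherian}, one inducts along the chain $B\supseteq B^2\supseteq B^3\supseteq\cdots$, which terminates because annihilator nilpotency implies left nilpotency. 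Your argument is the cleaner and more self-contained of the two---it uses nothing beyond the ACC and elementary properties of kernels---whereas the paper's proof exhibits how the skew-brace statement descends from the classical Hopficity of finitely generated nilpotent groups. Both are complete; no gap in your proposal.
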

\begin{proof}
Let $f: B \longrightarrow B$ be a surjective morphism of braces. Then the induced morphism $\tilde{f}:B/B^2 \longrightarrow B/B^2$ is also surjective. As $B$ is a finitely generated skew brace, it follows that $B/B^2$ is a finitely generated nilpotent group. As $B/B^2$ is Hopfian, it follows that $\tilde{f}$ is an isomorphism. Hence, $\ker f$ is contained in $B^2$. Furthermore, as $f$ is surjective, it can be restricted to a surjective skew brace morphism $f_1: B^2 \longrightarrow B^2$. As $B^2$ is finitely generated by Theorem \ref{thm:noetherian}, the result follows by induction.
\end{proof}

Note that Corollary \ref{cor:Hopfian} can also be proven as a direct consequence of the ACC property, similar to standard proofs in group and module theory.

\section{Periodic elements in annihilator nilpotent skew braces}
\label{torsion}

In this section we link the torsion of the additive group with the torsion of the multiplicative group for annihilator nilpotent skew braces. Futhermore, we show that finitely generated annihilator nilpotent skew braces are residually finite. Recall that in a locally nilpotent group (i.e. finitely generated subgroups are nilpotent) the periodic elements form a normal locally finite subgroup. The periodic elements of a group $G$ will be denoted by $T(G)$. For a skew brace $A=(A,+,\circ)$ we denote by $T_{+}(A)$ the torsion elements of the additive group $(A,+)$, and by $T_{\circ}(A)$ the periodic elements of the multiplicative group $(A,\circ)$.

\begin{lem}\label{lem:torsioncentral}
Let $A$ be an annihilator nilpotent skew brace. If 
$T_{+}(\Ann(A)) =\{ 0\}$, then 
$T_{+}(\Ann_{n+1}(A)/\Ann_{n}(A))=\{0\}$ for all $n$.
\end{lem}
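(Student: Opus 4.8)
The plan is to argue by induction on $n$, mirroring the classical fact that in a nilpotent group a torsion-free centre forces every factor of the upper central series to be torsion-free. The starting point is the defining property of the series, $\Ann_{n+1}(A)/\Ann_n(A)=\Ann(A/\Ann_n(A))$, together with the following characterization obtained by unwinding the definition of the annihilator: for $x\in A$,
\[
x\in\Ann_{n+1}(A)\iff x*a,\ a*x,\ [x,a]_{+}\in\Ann_n(A)\ \text{ for all }a\in A.
\]
The base case $n=0$ is exactly the hypothesis $T_{+}(\Ann(A))=\{0\}$. For the inductive step I would pass to the skew brace $B=A/\Ann_{n-1}(A)$: by the definition of the series $Z:=\Ann(B)=\Ann_n(A)/\Ann_{n-1}(A)$ is additively torsion-free by the induction hypothesis, while $\Ann_2(B)/Z\cong\Ann_{n+1}(A)/\Ann_n(A)$. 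Hence it suffices to prove the self-contained statement: \emph{if $B$ is a skew brace whose annihilator $Z=\Ann(B)$ is additively torsion-free, then $\Ann_2(B)/Z$ is additively torsion-free.}

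To prove this core statement I take $x\in\Ann_2(B)$ with $mx\in Z$ for some positive integer $m$, and must show $x\in Z$, i.e.\ $x*a=a*x=0$ and $[x,a]_{+}=0$ for all $a$. By the characterization, all three elements $x*a,\ a*x,\ [x,a]_{+}$ already lie in $Z$, so since $Z$ is torsion-free it is enough to show a positive additive multiple of each vanishes. I would first record the properties of $Z=\Ann(B)$ that drive the computations: $Z\subseteq Z(B,+)$; each $z\in Z$ lies in $\Ker\lambda\cap\Fix(B)$, so $z*a=0$, $a*z=0$ and $b*z=0$ for all $a,b$; and $\lambda_x$ induces the identity on $B/Z$ because $x*c\in Z$ for all $c$.

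The commutator coordinate is the easiest: as $[x,a]_{+}\in Z\subseteq Z(B,+)$ is central in $(B,+)$, the standard group identity $[mx,a]_{+}=m[x,a]_{+}$ applies, and $mx\in Z$ gives $[mx,a]_{+}=0$, hence $m[x,a]_{+}=0$ and $[x,a]_{+}=0$. For the two $*$-coordinates the difficulty is that $*$ is not biadditive, so I would establish by induction on $m$ the identities $(mx)*a=m(x*a)$ and $a*(mx)=m(a*x)$. The second follows from Lemma~\ref{lem:id*}(1) and the centrality of $a*((m-1)x)\in Z$. For the first, splitting $mx=x+(m-1)x$ and applying Lemma~\ref{lem:id*}(2) produces the term $\lambda_x^{-1}((m-1)x)*a$, which I simplify using $\lambda_x^{-1}((m-1)x)\equiv(m-1)x\pmod Z$ together with the auxiliary identity $(p+z)*a=p*a$ for $z\in Z$ (a one-line consequence of Lemma~\ref{lem:id*}(2) and $z\in\Ker\lambda\cap\Fix(B)$); the cross term $x*\big(((m-1)x)*a\big)$ vanishes since its inner argument lies in $Z$ and $x*z=0$ there. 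Once both identities hold, $mx\in Z\subseteq\Ker\lambda$ gives $(mx)*a=0=a*(mx)$, whence $m(x*a)=m(a*x)=0$ and torsion-freeness of $Z$ forces $x*a=a*x=0$. Combined with the commutator case this yields $x\in Z$, completing the induction.

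The main obstacle is precisely the non-biadditivity of $*$: unlike the additive commutator, $(mx)*a$ does not expand termwise, and the argument hinges on showing that every correction term produced by Lemma~\ref{lem:id*} lands in the annihilator $Z$, where it either vanishes or is absorbed using that $Z$ is central and fixed. Tracking the distinction between ``lies in $\Ann_n$'' and ``lies in $\Ann_{n-1}$''—made clean by passing to the quotient $B=A/\Ann_{n-1}(A)$—is what keeps the bookkeeping manageable.
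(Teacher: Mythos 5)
Your proposal is correct and takes essentially the same route as the paper: both reduce, via induction and the identification of $\Ann_{k}$ of a quotient with a quotient of the $\Ann$-series, to the core claim that a torsion-free annihilator forces $\Ann_2/\Ann$ to be torsion-free, and both then prove $(mx)*a=m(x*a)$, $a*(mx)=m(a*x)$ and $[mx,a]_+=m[x,a]_+$ from Lemma~\ref{lem:id*}, with every correction term landing in the (central, fixed, $\lambda$-trivial) annihilator, so that torsion-freeness finishes the argument. The only cosmetic difference is that the paper first establishes $A*a=0$ (so that $na=a^n$ and $\lambda_a^{-1}$ acts trivially on the relevant multiples) before expanding $(na)*b$, whereas you absorb $\lambda_x^{-1}((m-1)x)$ modulo $Z=\Ann(B)$ via the auxiliary identity $(p+z)*a=p*a$; both devices handle the non-biadditivity of $*$ in the same way.
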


\begin{proof}
We prove this by induction on the nilpotence class $m$. 
If $m=1$ then the result is obvious.
So suppose the result holds for all annihilator nilpotent skew braces of class less than $m$. In particular it holds for $A/\Ann(A)$.
If $T_{+}(\Ann_{n+1}(A)/\Ann_{n}(A))\neq \{ 0\}$ for some $n$, then by the assumption $n>0$ and, thus 
by the induction hypothesis $\Ann_{2}(A)$ contains an element $a+\Ann(A)$ that 
is nonzero and periodic in $\Ann_{2}(A)/\Ann(A)$. In particular, $A*a\subseteq\Ann(A)$
and $ka\in\Ann (A)$ for some positive integer $k$. 
By Lemma~\ref{lem:id*} we know that for any $x,y,z\in A$ we have $x*(y+z)  =
x*y +y +x*z -y$.  In particular, 
for any $b\in A$ and positive integer $n$, 
\begin{align*}
b*(na) &=b* ((n-1)a +a)=
b*(n-1)a +(n-1)a +b*a -(n-1)a.    
\end{align*}
As $b*a\in\Ann(A)$, it follows that $b*na = b*(n-1)a + b*a$. Hence, by induction, $b*na =n(b*a)$.
As $ka\in\Ann(A)$ we obtain that $k(b*a)=0$. 

Suppose $A*a\neq \{ 0\}$. Then let  $b\in A$ with $b*a\neq 0$. So, by the previous,  $b*a$ is a non-trivial elements in $T_{+}(\Ann(A))$, a contradiction. 

Hence, $A*a=\{ 0\}$.
Then  $na = a^n$ for all positive integers $n$ and thus also $\lambda_a (a^n) =a^n$. 
By Lemma~\ref{lem:id*} we know that 
\[
(x+y)*z =x*(\lambda_{x}^{-1}(y) *z) +\lambda_{x}^{-1}(y)*z +x*z
\]
for all $x,y,z\in A$.
Hence, for any $b\in A$,
$$(na)*b=(a+a^{n-1})*b = a*(a^{n-1}*b)  +(a^{n-1})*b +a*b.$$
Since $a^{k-1}*b \in \Ann(A)$ this yields
 $$(na)*b=  ((n-1)a)*b + a*b.$$
So, by induction, 
  $$(na)*b=n(a*b),$$
for all positive integers $n$.  
Because $ka\in\Ann(A)$ we get that $k(a*b)=0$.
Consequently, if  $a*b\neq 0$, for some $b\in A$, 
then $T_{+}(\Ann(A)) \neq \{ 0\}$, a contradiction.

Therefore,  $A*a=a*A=0$. In $(A,+)$ we have 
\[
[x+y,z]_+=x+[y,z]_+-x+[x,z]_+
\]
for all $x,y,x\in A$. 
Then 
\begin{align*}
[ka,b]_+&=[(k-1)a+a,b]_+\\
&= (k-1)a+[a,b]_+-(k-1)a+[(k-1)a,b]_+
\end{align*}
for all $b\in A$.
As $\left\lbrack a, b\right\rbrack_+ \in \Ann(B) \subseteq Z(A,+)$, we thus get that 
 $$[ka,b]_+
= [a,b]_+ + [(k-1)a,b]_+ =...=k[a,b]_+.$$ 
Now, if $[a,b]_+ \neq 0$, then $T_+(\Ann(A))$ is non-trivial. 
Hence, $A*a=a*A=0$ and $a \in Z(A,+)$, which implies that $a \in \Ann(A)$, again a contradiction.
\end{proof}

\begin{pro}\label{pro:torsionpart}
Let $A$ be an annihilator nilpotent skew brace. Then
\begin{enumerate}
    \item 
$T_{+}(A)= T_{\circ}(A)$,
    \item $T_{+}(A)$ is an ideal of $A$,
    \item  $T_{+}(A/T_{+}(A))=\{ 0\}$,
    \item  $T_{+}(A)$ is finite if $A$ is finitely generated as a skew brace.
\end{enumerate}
\end{pro}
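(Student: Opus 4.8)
The plan is to establish (1) first and then read off (2)--(4) from it together with the results of Sections~\ref{preliminaries} and~\ref{noetherian}. Since $A$ is annihilator nilpotent, both $(A,+)$ and $(A,\circ)$ are nilpotent groups, so $T_{+}(A)$ and $T_{\circ}(A)$ are characteristic subgroups of $(A,+)$ and $(A,\circ)$ respectively. I would prove (1) by induction on the length of the annihilator series, passing to $\overline A=A/\Ann(A)$, whose length is one smaller; the crucial feature is that $\Ann(A)$ is a trivial brace, so on it the two operations, and hence the two notions of periodicity, coincide. The computational engine throughout is the identity $x^{\circ n}=x+\lambda_x(x)+\cdots+\lambda_x^{n-1}(x)$, obtained by an immediate induction from $a\circ b=a+\lambda_a(b)$.

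For $T_{+}(A)\subseteq T_{\circ}(A)$: if $a\in T_{+}(A)$ then $\overline a\in T_{+}(\overline A)=T_{\circ}(\overline A)$ by the inductive hypothesis, so $a^{\circ m}\in\Ann(A)$ for some $m$. By the displayed identity, $a^{\circ m}$ is a finite additive sum of the elements $\lambda_a^{i}(a)$, each of which has the same finite additive order as $a$ (as $\lambda_a$ is an additive automorphism); since $T_{+}(A)$ is a subgroup, $a^{\circ m}\in T_{+}(A)$. As $a^{\circ m}$ lies in the trivial brace $\Ann(A)$, it is also multiplicatively periodic, and therefore so is $a$. The reverse inclusion $T_{\circ}(A)\subseteq T_{+}(A)$ is the main obstacle, since additive torsion is not visibly preserved by multiplicative conjugation. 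Here I would argue with unique root extraction: set $Q=(A,+)/T_{+}(A)$ with quotient map $\pi$, a \emph{torsion-free} nilpotent group, in which $pu=pv$ implies $u=v$. Take $a\in T_{\circ}(A)$ of multiplicative order $n$. By induction $\overline a\in T_{\circ}(\overline A)=T_{+}(\overline A)$, so $pa\in\Ann(A)$ for some $p$. As $\Ann(A)\subseteq\Fix(A)$ we get $\lambda_a(pa)=pa$, i.e.\ $p\lambda_a(a)=pa$; applying $\pi$ and cancelling the common $p$-fold multiple yields $\pi(\lambda_a(a))=\pi(a)$, that is $a*a\in\Ker\pi=T_{+}(A)$. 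A short induction then gives $\lambda_a^{i}(a)\equiv a\pmod{T_{+}(A)}$ for all $i$, so $\pi(a^{\circ n})=n\,\pi(a)$; since $a^{\circ n}=0$ and $Q$ is torsion-free, $\pi(a)=0$, i.e.\ $a\in T_{+}(A)$. (Alternatively, Lemma~\ref{lem:torsioncentral} shows that additive torsion-freeness of $\Ann(A)$ propagates to all factors of the annihilator series, whence to $(A,\circ)$; this yields the same conclusion after reducing modulo the additive torsion.)

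With (1) established, write $T=T_{+}(A)=T_{\circ}(A)$. For (2): $T$ is $\lambda$-invariant, because each $\lambda_a$ is an additive automorphism and hence preserves $T_{+}(A)$; and, being the multiplicative periodic subgroup, $T$ is normal in $(A,\circ)$. A $\lambda$-invariant subgroup that is normal in $(A,\circ)$ is an ideal, so $T$ is an ideal of $A$. For (3): the additive group of $A/T$ is $(A,+)/T_{+}(A)=Q$, which is torsion-free; hence $T_{+}(A/T_{+}(A))=\{0\}$.

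For (4), assume $A$ is finitely generated as a skew brace. By Theorem~\ref{thm:noetherian}, $A$ has ACC on sub skew braces, so the ideal $T$ is itself finitely generated as a skew brace; being annihilator nilpotent, its additive group $(T,+)$ is then finitely generated by the corollary characterising finite generation (equivalence of finite generation as a skew brace and as an additive group). But $(T,+)$ is a torsion nilpotent group, and a finitely generated torsion nilpotent group is finite; therefore $T=T_{+}(A)$ is finite. The only genuinely delicate point in the whole argument is the reverse inclusion in (1), where the interplay between the additive order of an element and its multiplicative order must be controlled; the unique-root property of torsion-free nilpotent groups (equivalently, Lemma~\ref{lem:torsioncentral}) is exactly what breaks this deadlock.
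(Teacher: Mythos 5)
Your proof is correct, but it reaches the key part (1) by a genuinely different route than the paper. The paper works in the opposite order: it first proves (4) by induction on the nilpotency class, using Theorem~\ref{thm:noetherian} and the embedding $T_{+}(A)/T_{+}(\Ann(A))\simeq (T_{+}(A)+\Ann(A))/\Ann(A)\subseteq T_{+}(A/\Ann(A))$; it then deduces the inclusion $T_{+}(A)\subseteq T_{\circ}(A)$ from the \emph{finiteness} of the torsion of the sub skew brace generated by a single element; it proves idealness (2) via a commutator computation showing $b\circ a\circ\overline{b}\circ\overline{a}\in T_{\circ}(\Ann_{n-1}(A))=T_{+}(\Ann_{n-1}(A))$; and only at the very end does it obtain the reverse inclusion, by reducing modulo $T_{+}(A)$ (which requires (2) and (3) to be in place) and invoking Lemma~\ref{lem:torsioncentral} to conclude that $(A,\circ)$ is torsion-free. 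You instead prove the full equality (1) up front by induction on the annihilator length, with the power formula $a^{\circ n}=a+\lambda_a(a)+\cdots+\lambda_a^{n-1}(a)$ doing the work in both directions, and with Mal'cev's unique extraction of roots in the torsion-free nilpotent group $(A,+)/T_{+}(A)$ playing the role that Lemma~\ref{lem:torsioncentral} plays in the paper; parts (2)--(4) then fall out almost immediately. What each buys: your argument is shorter, avoids the finiteness bootstrap entirely (your (1) does not need $T_{+}(A)$ to be an ideal, since $(A,+)/T_{+}(A)$ is a purely group-theoretic quotient), but imports the classical unique-roots theorem for torsion-free nilpotent groups, which the paper never uses; the paper's longer chain stays within its own lemmas. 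Two small cautions. First, your parenthetical alternative for the reverse inclusion (``reduce modulo the additive torsion and apply Lemma~\ref{lem:torsioncentral}'') is circular as you have arranged things: forming the brace quotient $A/T_{+}(A)$ presupposes (2), which you derive from (1); the paper breaks this circle by proving (2) from the one-sided inclusion $T_{+}(A)\subseteq T_{\circ}(A)$ alone. Second, the criterion ``a $\lambda$-invariant subgroup normal in $(A,\circ)$ is an ideal'' is incomplete -- an ideal must also be normal in $(A,+)$ -- but your proof of (2) survives, because you had already observed that $T_{+}(A)$ is characteristic, hence normal, in the nilpotent group $(A,+)$.
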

\begin{proof}
The annihilator nilpotency yields that $(A,+)$ and $(A,\circ)$ are nilpotent groups. Hence $T_{+}(A)$ (respectively $T_{\circ}(A)$) is a normal subgroup of $(A,+)$ (respectively $(A,\circ)$).
Since each $\lambda_a$ is an additive homomorphism of $(A,+)$ we have that $T_{+}(A)$ is a left ideal of $A$. In particular it is a skew subbrace of $A$. In case $A$ is finitely generated as a skew brace, we get from Theorem~\ref{thm:noetherian} that $T_{+}(A)$ is a skew brace with ACC on sub skew braces. 

Since $A$ is annihilator nilpotent, we proceed by induction on the class $n$ of $A$. If $n=1$, then $A=\Ann(A)$ and then (1)--(3) are obvious. Assume that $A$ is finitely generated as a skew brace. Since $A=\Ann(A)$, $T_+(A)$ is a finite abelian group. Hence (4) holds. Assume now that
the result holds for all
annihilator nilpotent skew braces of nilpotence class $k<n$. We first prove (4). Assume that $A$ is finitely generated as a skew brace. By the inductive hypothesis, $T_+(A/\Ann(A))$ is finite. By Theorem \ref{thm:noetherian}, 
$T_+(\Ann(A))$ is a finite ideal of $A$. Since 
\[
T_+(A)/T_+(\Ann(A))\simeq
(T_+(A)+\Ann(A))/\Ann(A)\subseteq 
T_+(A/\Ann(A)),
\]
it follows that $T_+(A)$ is finite. 

Now we prove that $T_+(A)\subseteq T_{\circ}(A)$. Let $a\in T_+(A)$. Then 
the sub brace $B$ generated by $a$ is a finitely generated annihilator nilpotent
skew brace of nilpotence class $\leq n$. Hence $T_+(B)$ is finite. Since $T_+(B)$ 
is a left ideal of $B$, $T_+(B)\subseteq T_\circ(B)$. Since $a\in T_+(B)$, 
$T_+(A)\subseteq T_\circ(A)$. 

We now prove
that $T_+(A)$ is an ideal of $A$. Since
$T_+(A)$ is a left ideal of $A$ and a normal subgroup of $(A,+)$, it is enough to show that $T_+(A)$ is a normal subgroup of $(A,\circ)$. Let $a\in T_+(A)$ and $b\in A$. Since $T_+(A)\subseteq T_\circ(A)$, 
\[
b\circ a\circ \overline{b}\circ\overline{a}\in T_\circ(\Ann_{n-1}(A))=T_+(\Ann_{n-1}(A)).
\]
Since $b\circ a\circ\overline{b}\circ\overline{a}=b\circ a\circ\overline{b}+\lambda_{b\circ a\circ\overline{b}\circ\overline{a}}(-a)\in T_+(A)$,
\[
b\circ a\circ\overline{b}=b\circ a\circ\overline{b}\circ\overline{a}+\lambda_{b\circ a\circ\overline{b}\circ\overline{a}}(a)\in T_+(A).
\]
Hence $T_+(A)$ is an ideal of $A$. Clearly,
$T_+(A/T_+(A))=0$.

Finally, we prove that $T_+(A)=T_\circ(A)$.
To do so, without loss of generality, we may assume that $A$ is finitely generated as a skew brace. Because $T_{+}(A)$ is an ideal of $A$, $T_{+}(A)\subseteq T_{\circ}(A)$ and $T_{+}(A/T_{+}(A))=\{ 0\}$, we also may assume that $T_{+}(A)=\{ 0\}$. Hence, by Lemma~\ref{lem:torsioncentral}, each $\Ann_{n+1}(A)/\Ann_{n}(A)$ is a trivial brace and a torsion-free abelian (multiplicative) group. It follows that $(A,\circ)$ is torsion-free as well, and the claim follows by induction.
\end{proof}

As a corollary, one immediately obtains the following result:

\begin{corollary}
Let $A$ be an annihilator nilpotent skew brace such that $T_{+}(A)=\{ 0\}$. 
For any $a,b\in A$, $a^n=b^n$ or $na=nb$ implies $a=b$.
\end{corollary}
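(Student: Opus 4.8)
The plan is to reduce both implications to the classical fact that torsion-free nilpotent groups admit unique extraction of $n$-th roots. The hypothesis $T_+(A)=\{0\}$ only concerns the additive group, so the first step is to promote it to the multiplicative side: by Proposition~\ref{pro:torsionpart}(1) we have $T_+(A)=T_\circ(A)$, whence $T_\circ(A)=\{0\}$ as well, and both $(A,+)$ and $(A,\circ)$ are torsion-free. Moreover, since $A$ is annihilator nilpotent, both $(A,+)$ and $(A,\circ)$ are nilpotent groups, as recorded just after the definition of annihilator nilpotency. Hence $(A,+)$ and $(A,\circ)$ are torsion-free nilpotent groups.

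I would then invoke the classical theorem that in a torsion-free nilpotent group $G$ the relation $x^n=y^n$, for a fixed positive integer $n$, forces $x=y$. Applying this in $(A,\circ)$ to the relation $a^n=b^n$ gives $a=b$; applying it in $(A,+)$, where the group law is written additively and the relation reads $na=nb$, gives $a=b$ as well. This settles both cases of the statement.

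The only point requiring any care is the passage from additive to multiplicative torsion-freeness, which is exactly what Proposition~\ref{pro:torsionpart}(1) supplies, so there is no genuine obstacle. For completeness I note that the group-theoretic input can itself be proved by induction on the nilpotency class, the crucial step being that in a torsion-free nilpotent group $x^n$ commuting with $y$ already forces $x$ to commute with $y$; once $a$ and $b$ are shown to commute in the relevant operation, the conclusion drops out from $(a\circ\overline{b})^n=0$, respectively $n(a-b)=0$, together with torsion-freeness. Since this is standard, I would simply cite it rather than reprove it.
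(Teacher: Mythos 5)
Your proposal is correct and is precisely the argument the paper intends: the corollary is stated as an immediate consequence of Proposition~\ref{pro:torsionpart}, namely that $T_{+}(A)=T_{\circ}(A)=\{0\}$ makes both $(A,+)$ and $(A,\circ)$ torsion-free nilpotent groups, where the classical uniqueness of $n$-th roots applies. Your closing remark sketching the group-theoretic input is a fine optional addition but not needed beyond the citation.
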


\begin{lem}\label{lem:finiteanngivesfinite}
Let $B$ be an annihilator nilpotent skew brace. If $\Ann(B)$ is finite and $B$ is finitely generated, then $B$ is finite.
\end{lem}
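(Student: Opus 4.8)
The plan is to induct on the annihilator nilpotency class $c$ of $B$, i.e.\ the least $c$ with $\Ann_c(B)=B$. If $c=1$ then $B=\Ann(B)$ is finite by hypothesis. For the inductive step I would pass to $\overline{B}=B/\Ann(B)$: this is finitely generated (a quotient of $B$), annihilator nilpotent of class $c-1$, and its annihilator is $\Ann(\overline{B})=\Ann_2(B)/\Ann(B)$. If this last group is finite, then the induction hypothesis applies to $\overline{B}$ and yields that $\overline{B}$ is finite; together with the finiteness of $\Ann(B)$ this makes $B$ finite. Thus the whole argument reduces to the single claim that $\Ann_2(B)/\Ann(B)$ is finite.

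To prove this claim I would use that, by definition, $a\in\Ann_2(B)$ means exactly that $a*b$, $b*a$ and $[a,b]_+$ lie in $\Ann(B)$ for every $b\in B$. This permits three maps into the finite abelian group $(\Ann(B),+)$, namely $L_a\colon b\mapsto a*b$, $R_a\colon b\mapsto b*a$ and $C_a\colon b\mapsto[a,b]_+$, assembled into
\[
\Phi\colon \Ann_2(B)\to \Hom((B,+),\Ann(B))\times\Hom((B,\circ),\Ann(B))\times\Hom((B,+),\Ann(B)),\quad a\mapsto(L_a,R_a,C_a).
\]
The kernel of $\Phi$ is exactly $\Soc(B)\cap\Fix(B)=\Ann(B)$: $L_a\equiv0$ says $a\in\Ker\lambda$, $C_a\equiv0$ says $a\in Z(B,+)$, and $R_a\equiv0$ says $a\in\Fix(B)$. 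Since $B$ is a finitely generated annihilator nilpotent skew brace, both $(B,+)$ and $(B,\circ)$ are finitely generated groups (by the corollary to Theorem~\ref{thm:noetherian}); as $\Ann(B)$ is finite, each $\Hom$-group, and hence the whole codomain, is finite. Provided $\Phi$ is a homomorphism of abelian groups, it then induces an embedding of $\Ann_2(B)/\Ann(B)$ into a finite group, which proves the claim.

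The heart of the argument, and the step I expect to be the main obstacle, is verifying that $L_a,R_a,C_a$ are genuine homomorphisms and that $a\mapsto(L_a,R_a,C_a)$ is additive in $a$. All of this rests on Lemma~\ref{lem:id*} together with the fact that $\Ann(B)\subseteq Z(B,+)\cap\Ker\lambda\cap\Fix(B)$, so any $*$-product or additive commutator with one factor in $\Ann(B)$ either vanishes or is central. Concretely, additivity of $L_a$ and $C_a$ in $b$ over $(B,+)$ follows from Lemma~\ref{lem:id*}.(1) and the commutator expansion, using that $a*b'$ and $[a,b']_+$ are central. The delicate point is $R_a$: the naive identity $(b+b')*a=b*a+b'*a$ need \emph{not} hold over $(B,+)$, but $R_a$ is a homomorphism of the \emph{multiplicative} group, because $(b\circ b')*a=\lambda_b(b'*a)+b*a=b*a+b'*a$ since $b'*a\in\Fix(B)$ is $\lambda$-fixed; this is why the multiplicative group occurs in the middle factor. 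Finally, additivity of $\Phi$ in $a$ is checked in the same spirit, the only laborious case being $(a+a')*b=a*b+a'*b$, which uses Lemma~\ref{lem:id*}.(2) and the vanishing of $a*w$ and $w*b$ for $w\in\Ann(B)$. Once these (bi)linearity identities are established, the embedding above finishes the claim and hence the lemma.
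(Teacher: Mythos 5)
Your proof is correct, but it establishes the crucial claim by a genuinely different mechanism than the paper's. Both arguments share the same skeleton --- induct along the annihilator series (the paper phrases it as a minimal counterexample) and reduce everything to showing that $\Ann_2(B)/\Ann(B)$ is finite --- and both ultimately rest on the same ``bilinearity modulo $\Ann(B)$'' manipulations via Lemma~\ref{lem:id*}. But where you embed $\Ann_2(B)/\Ann(B)$ into the finite group $\Hom((B,+),\Ann(B))\times\Hom((B,\circ),\Ann(B))\times\Hom((B,+),\Ann(B))$ --- a brace-theoretic analogue of the classical group-theoretic embedding of $Z_2(G)/Z(G)$ into $\Hom(G,Z(G))$ when $Z(G)$ is finite --- the paper instead takes $e$ to be the exponent of the finite trivial brace $\Ann(B)$ and computes, for $x\in\Ann_2(B)$ and all $g\in B$, that $[x^e,g]_\circ=[x,g]_\circ^e=0$, $[x^e,g]_+=e[x,g]_+=0$ and $g*(x^e)=e(g*x)=0$, forcing $x^e\in Z(B,\circ)\cap Z(B,+)\cap\Fix(B)\subseteq\Ann(B)$ and thereby ruling out an element of infinite order modulo $\Ann(B)$ in the finitely generated abelian group $\Ann_2(B)/\Ann(B)$. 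The trade-off: the paper's exponent argument needs only that $\Ann_2(B)$ is finitely generated (via the ACC of Theorem~\ref{thm:noetherian}) and shows the stronger fact that $\Ann_2(B)/\Ann(B)$ has exponent dividing $e$; your route invokes the heavier Corollary that $(B,+)$ and $(B,\circ)$ are finitely generated groups (same ultimate source), but avoids torsion bookkeeping and gives an explicit finite bound on $|\Ann_2(B)/\Ann(B)|$. Two details deserve to be made explicit in a final write-up: your observation that $R_a$ is a homomorphism only on $(B,\circ)$, not on $(B,+)$, is exactly the subtlety the paper handles in Lemma~\ref{lem:gammaminus2}(3)--(4); and in your ``laborious case'' $(a+a')*b=a*b+a'*b$ you should record that $\lambda_a^{-1}(a')=a'+z$ for some $z\in\Ann(B)$ (because $a\in\Ann_2(B)$ forces $\lambda_a\equiv\id$ modulo $\Ann(B)$), after which a second application of Lemma~\ref{lem:id*}(2) together with the vanishings $z*b=0$ and $a'*(z*b)=0$ finishes the computation --- the ingredients you cite do suffice, but this intermediate substitution is where they are actually used.
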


\begin{proof}
Let $B$ be a counterexample with annihilator chain of minimal length. Either $\Ann_2(B)$ is finite or $\Ann_2(B)$ contains an element that is of infinite order modulo $\Ann(B)$. In the first case, $B/\Ann(B)$ is a counterexample with shorter annihilator chain, so this does not occur. Hence, $\Ann_2(B)$ contains an element of infinite order modulo $\Ann(B)$. We will show that this can not happen. Let $x \in \Ann_2(B)$ of infinite order modulo $\Ann(B)$. Denote by $e$ the exponent of  $(\Ann(B),\circ)$. Consider $g \in B$. Then it holds that $[x,g]_{\circ}$ is contained in $\Ann(B)$ and 
\[
[x^e,g]_{\circ} = [x,g]_{\circ}^e = 0.
\]
Since $ex-x^e\in\Ann(B)$ and
$[x,g]_+\in\Ann(B)$ and $g*x\in\Ann(B)$, 
\[
[x^e,g]_+=[ex,g]_+=e[x,g]_+=0\text{ and }
g*(x^e)=g*(ex)=e(g*x)=0.
\]
Hence $x^e\in\Ann(B)$, a contradiction. Therefore
the result follows.
\end{proof}

\begin{thm}
Let $B$ be an annihilator nilpotent skew brace. If $B$ is finitely generated, then $B$ is residually finite.
\end{thm}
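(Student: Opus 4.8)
The plan is to establish the contrapositive formulation of residual finiteness: for every nonzero $b\in B$ I will produce an ideal $I$ of finite index with $b\notin I$, noting that a finite-index ideal yields a finite quotient skew brace. The strategy mirrors the classical proof that finitely generated nilpotent groups are residually finite, with Lemma~\ref{lem:finiteanngivesfinite} (finite annihilator together with finite generation forces finiteness) playing the role of the group-theoretic fact that a finitely generated nilpotent group with finite centre is finite.

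First I would fix $0\neq b$ and invoke Theorem~\ref{thm:noetherian}: since $B$ is finitely generated and annihilator nilpotent it has ACC on sub skew braces, hence on ideals, so the nonempty family of ideals not containing $b$ has a maximal element $I$. Passing to $\overline{B}=B/I$, which is again finitely generated and annihilator nilpotent, I may assume that $b\neq 0$ and that every nonzero ideal of $B$ contains $b$. Equivalently, the intersection $M$ of all nonzero ideals of $B$ is nonzero (it contains $b$) and is the unique minimal nonzero ideal.

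The main step, and the one I expect to be the real obstacle, is to show that $M\subseteq\Ann(B)$; this is the brace analogue of the statement that a nonzero normal subgroup of a nilpotent group meets the centre. Given any nonzero ideal $J$, I would choose $i$ minimal with $J\cap\Ann_i(B)\neq 0$ and pick $0\neq x\in J\cap\Ann_i(B)$. Because $J$ is an ideal it absorbs $*$ on both sides and additive commutators, while $x\in\Ann_i(B)$ forces $a*x$, $x*a$ and $[a,x]_+$ to lie in $\Ann_{i-1}(B)$ for every $a\in B$ (their images vanish in $\Ann(B/\Ann_{i-1}(B))$). Thus these elements belong to $J\cap\Ann_{i-1}(B)=0$ by minimality of $i$, which means $x\in\Ann(B)$, so $J\cap\Ann(B)\neq 0$. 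Applying this to $M$ and using that $M\cap\Ann(B)$ is a nonzero ideal contained in the minimal ideal $M$, I conclude $M\subseteq\Ann(B)$.

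Finally I would exploit that $\Ann(B)$ is a trivial, central brace, so every additive subgroup of $\Ann(B)$ is automatically an ideal of $B$ (it is $\lambda$-invariant and central in both $(B,+)$ and $(B,\circ)$). Consequently $M$ has no proper nonzero additive subgroup, whence $(M,+)\cong\Z/p$ for some prime $p$; and every nonzero additive subgroup of $\Ann(B)$, being a nonzero ideal, must contain $M$. Therefore $\Ann(B)$ is a finitely generated abelian group whose unique minimal subgroup is $\Z/p$, which forces $\Ann(B)$ to be finite. Lemma~\ref{lem:finiteanngivesfinite} then shows that $\overline{B}$ is finite, and since the image of $b$ in $\overline{B}$ is nonzero this is the desired finite quotient. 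Hence $B$ is residually finite.
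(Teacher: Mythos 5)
Your proof is correct, and it takes a genuinely different route from the paper's. The paper argues elementwise with a case split: for a torsion element $g$ it builds an ascending chain of torsion-free ideals $N_k$ by repeatedly splitting off the free part of $\Ann(B/N_{k-1})$, terminates the chain by ACC, and invokes Lemma~\ref{lem:finiteanngivesfinite} to get a finite quotient avoiding $g$; for a non-torsion element it first passes to $B/T_+(B)$ (which requires Proposition~\ref{pro:torsionpart} to know the torsion forms an ideal) and reduces to the torsion case via the ideal $\langle g^k\rangle$. You instead transplant the classical monolith argument for finitely generated nilpotent groups: use ACC (Theorem~\ref{thm:noetherian}) to pass to a quotient in which every nonzero ideal contains $b$, show the monolith $M$ lies in $\Ann(B)$, deduce $M\cong\Z/p$ and then that $\Ann(B)$ is finite (a finitely generated abelian group all of whose nonzero subgroups contain a fixed $\Z/p$ has no element of infinite order), and finish with the same Lemma~\ref{lem:finiteanngivesfinite}. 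The key new ingredient you need --- that every nonzero ideal $J$ of an annihilator nilpotent skew brace meets the annihilator nontrivially --- is correctly proved by your minimal-$i$ argument, since $J$ absorbs $a*x$, $x*a$ and $[a,x]_+$ while these land in $\Ann_{i-1}(B)$ for $x\in\Ann_i(B)$; likewise your observation that every additive subgroup of $\Ann(B)$ is an ideal of $B$ checks out (such subgroups are central in both groups and pointwise $\lambda$-fixed). What your approach buys: a single uniform argument with no torsion/torsion-free case split, no dependence on Proposition~\ref{pro:torsionpart}, and two clean auxiliary lemmas of independent interest that parallel the group-theoretic facts that nontrivial normal subgroups of nilpotent groups meet the center and that a finitely generated nilpotent group with finite center is finite. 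What the paper's approach buys: the separating finite quotients are exhibited more explicitly (quotients by concrete torsion-free ideals), at the cost of a longer and more delicate construction.
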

\begin{proof}
Let $T$ denote the torsion of $B$. Let $g \in T$. As $\Ann(B)$ is a finitely generated abelian group, it follows that this is equal to $T' \times \Z^l$ for a positive integer $l$ and finite abelian group $T'$. Denote $N_1= \Z^l$ in $\Ann(B)$. As $N_1 \subseteq \Ann(B)$, it holds that $N_1$ is an ideal of $B$. Consider $B/N_1$. Clearly, $g \not \in N_1$, which implies that $g+N_1 \neq N_1$. Suppose we have inductively defined ideals $N_k$, which are torsion-free with $N_{k-1}\subseteq N_k$ and $N_k/N_{k-1} \subseteq \Ann(B/N_{k-1})$. By Lemma \ref{lem:finiteanngivesfinite}, $\Ann(B/N_{k-1})$ is either finite, such that we end the chain, or $\Ann(B/N_{k-1})$ is infinite. Hence, $\Ann(B/N_{k-1})=T_k\times \Z^{l_k}$. Denote $N_k = \Z^{l_k}+N_{k-1}$. The chain $N_k$ ends due to $B$ having ACC on sub skew braces. Hence, there exists $N_k$ such that $\Ann(B/N_k)$ is finite. This shows, by Lemma \ref{lem:finiteanngivesfinite}, that $B/N_k$ is finite. As $g \not\in N_k$, this is a suitable finite image. Hence, we have shown that every torsion element survives in a finite homomorphic image of $B$.

Let $g \in B\setminus T$. Then consider $B'=B/T$. Either $g \in \Ann(B')$ or $g \not\in \Ann(B')$. In the latter case, one can continue by induction on the length of the annihilator chain. In the former case, consider for a positive integer $k>1$ the ideal $G=\langle g^k\rangle\subseteq \Ann(B')$ of $B'$. Consider $B'/G$. Clearly, $g+G \neq G$, but $g+G$ is a torsion element of $B'/G$. Hence, by the first case $g+G$ survives in a finite homomorphic image of $B'/G$, which shows the result.
\end{proof}

\section{Generating sets of annihilator nilpotent skew braces}
\label{generating}

A well-known result of Hirsch states that the commutator subgroup of a nilpotent group $G$ is a set of 
non-generating elements of $G$. We prove an analogue of this result in the context of skew braces that are annihilator nilpotent.

\begin{thm}\label{thm:HirschBrace}
Let $B$ be a skew brace of right nilpotency class $c+1$, i.e. $B^{(c+1)}=0$ and $B^{(c)}\neq 0$. If $A$ is a sub skew brace of $B$ such that $A+B^2=B$ and $A*B\subseteq A$, then $A=B$.
\end{thm}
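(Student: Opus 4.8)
The plan is to induct on the right nilpotency class, exploiting the fact that $B^{2}=B^{(2)}$ plays the role of the commutator subgroup in Hirsch's original group-theoretic statement. Write the class as $c+1$, so $B^{(c+1)}=0$ and $B^{(c)}\neq0$, and induct on $c$. For the base case $c=1$ one has $B^{2}=B^{(2)}=0$, so the hypothesis $A+B^{2}=B$ reads $A=B$ and there is nothing to prove. For the inductive step I would first record the standard facts that each $B^{(n)}$ is an ideal of $B$ (hence $\lambda$-invariant and normal in $(B,+)$), that the right series is descending, and that $B^{(c)}*B=B^{(c+1)}=0$.

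The reduction step is to pass to the quotient $\overline{B}=B/B^{(c)}$. Since $B^{(c)}$ is an ideal contained in $B^{(2)}$, the quotient is a skew brace with $\overline{B}^{(c)}=0$, so its right nilpotency class is at most $c$; moreover the hypotheses descend, giving $\overline{A}+\overline{B}^{\,2}=\overline{B}$ and $\overline{A}*\overline{B}\subseteq\overline{A}$. By the induction hypothesis $\overline{A}=\overline{B}$, which is exactly the statement that $A+B^{(c)}=B$. It therefore remains only to promote this additive generation to the genuine containment $B^{(c)}\subseteq A$, after which $B=A+B^{(c)}=A$ finishes the argument.

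For this final absorption step I would use that $B^{(c)}=B^{(c-1)}*B$ is the additive subgroup generated by the elements $y*b$ with $y\in B^{(c-1)}\subseteq B=A+B^{(c)}$ and $b\in B$. Writing $y=a+w$ with $a\in A$ and $w\in B^{(c)}$, Lemma~\ref{lem:id*}(2) gives
\[
(a+w)*b=a*(\lambda_a^{-1}(w)*b)+\lambda_a^{-1}(w)*b+a*b.
\]
Because $B^{(c)}$ is $\lambda$-invariant we have $\lambda_a^{-1}(w)\in B^{(c)}$, whence $\lambda_a^{-1}(w)*b\in B^{(c)}*B=0$, so the first two terms vanish and $y*b=a*b\in A*B\subseteq A$. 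As $A$ is an additive subgroup, every additive generator of $B^{(c)}$ lies in $A$, giving $B^{(c)}\subseteq A$ and hence $A=B$. The main obstacle is precisely this last step: the hypothesis only supplies additive generation, and the passage to containment must exploit both defining assumptions simultaneously—the term carrying the $B^{(c)}$-part of $y$ is annihilated by $B^{(c)}*B=0$, while the surviving term is captured by $A*B\subseteq A$. A secondary point that should be stated (and is standard in the literature) is that the terms $B^{(n)}$ of the right series are ideals, which legitimizes both the quotient in the reduction and the $\lambda$-invariance used above.
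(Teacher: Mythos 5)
Your proof is correct and follows essentially the same route as the paper: induct on $c$, apply the inductive hypothesis to $B/B^{(c)}$ (legitimate since each $B^{(n)}$ is an ideal, as you note) to obtain $B=A+B^{(c)}$, and then absorb the remainder into $A$ using Lemma~\ref{lem:id*}(2) together with $B^{(c)}*B=0$ and $A*B\subseteq A$. The only, harmless, divergence is in the final absorption: you show $B^{(c)}\subseteq A$ directly by decomposing the additive generators $y*b$ of $B^{(c)}=B^{(c-1)}*B$ via $y=a+w$ with $a\in A$, $w\in B^{(c)}$, whereas the paper instead shows $B*B\subseteq A$ by decomposing both factors of an arbitrary product $(a+b)*(x+y)$ --- your variant is, if anything, slightly more economical.
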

\begin{proof}
The proof is by induction on $c$. For $c=1$ it is clear. Consider the ideal series $$ 0 \subseteq B^{(c)} \subseteq ... \subseteq B^{(2)} \subseteq B.$$ Then the induction hypothesis can be applied on $B/B^{(c)}$. This proves that $B/B^{(c)} = A/(A \cap B^{(c)})$. Hence, $B = A + B^{(c)}$. 

We now prove that $B*B\subseteq A$; and thus the result follows. To do so, consider arbitrary elements 
$a+b,x+y\in B$ with $a,x\in A$ and $b,y\in B^{(c)}$.
Then, by Lemma~\ref{lem:id*}, 
\begin{align*}
 (a+b)*(x+y) &= (a+b)*x +x + (a*y)-x\\
   &=a*(\lambda_a^{-1}(b) *x)+ \lambda_a^{-1}(b)*x +a*x +x +(a*y)-x
\end{align*}
Since $B*B$ is an ideal of $B$ and $B^{(c)}*B=0$,  
\begin{gather*}
a*(\lambda_a^{-1}(b) *x)+ \lambda_a^{-1}(b)*x=0
\shortintertext{and hence}
(a+b)*(x+y)=(a*x)+x+(a*y)-x.
\end{gather*}
The assumption $A*B\subseteq A$ implies that indeed $B*B\subseteq A$.
\end{proof}

\begin{thm}\label{thm:HirschBraceNilp}
Let $B$ be an annihilator nilpotent skew brace. If $A$ is a sub skew brace of $B$ such that $A+B^2=B$, then $A=B$.
\end{thm}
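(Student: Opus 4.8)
The plan is to argue by induction on the annihilator nilpotency class of $B$, that is, on the least integer $n$ with $\Ann_n(B)=B$. The base case $n=1$ is immediate: then $B=\Ann(B)$ is a trivial abelian brace, so $B*B=0$, hence $B^2=0$ and $A=A+B^2=B$.

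For the inductive step I would first pass to the quotient $\overline{B}=B/\Ann(B)$, which is again annihilator nilpotent but of strictly smaller class. Since $B^2=B*B$ is an ideal (so that $B/B^2$ is a skew brace, as used in Corollary~\ref{cor:Hopfian}) and the $*$-operation is compatible with quotients, the image of $B^2$ in $\overline{B}$ is precisely $\overline{B}^2$. Therefore the hypothesis $A+B^2=B$ descends to $\overline{A}+\overline{B}^2=\overline{B}$, where $\overline{A}$ is the image of the sub skew brace $A$. Applying the induction hypothesis to $\overline{A}\leq\overline{B}$ yields $\overline{A}=\overline{B}$, i.e. $A+\Ann(B)=B$.

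The heart of the argument is then to upgrade $A+\Ann(B)=B$ to the inclusion $B^2\subseteq A$. I would write two arbitrary elements as $b_i=a_i+w_i$ with $a_i\in A$ and $w_i\in\Ann(B)$, and compute $b_1*b_2$ via Lemma~\ref{lem:id*}(1) and (2), invoking the defining properties of the annihilator: for $w\in\Ann(B)$ one has $x*w=0$ for all $x$, together with $\lambda_w=\id$ (so $w*x=0$) and $\lambda_a(w)=w$ since $w\in\Fix(B)$. These make every $w$-contribution vanish and collapse the expression all the way down to $b_1*b_2=a_1*a_2$, which lies in $A$ because $A$ is a sub skew brace and hence closed under $*$. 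As $B^2$ is the additive span of such products and $A$ is an additive subgroup, this gives $B^2\subseteq A$; combining with the hypothesis, $B=A+B^2=A$, as desired.

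The step I expect to be the main obstacle is the bookkeeping in the identity $b_1*b_2=a_1*a_2$: one must deploy all three characterizing properties of $\Ann(B)$ (additive and multiplicative centrality, triviality of $\lambda_w$, and $\lambda$-fixedness) at exactly the right places to annihilate every cross term produced by Lemma~\ref{lem:id*}. This is also precisely the feature that allows us to dispense with the extra hypothesis $A*B\subseteq A$ of Theorem~\ref{thm:HirschBrace}: full annihilator nilpotency, rather than mere right nilpotency, is what makes the inductive reduction modulo $\Ann(B)$ available and thereby forces $B^2\subseteq A$ on its own.
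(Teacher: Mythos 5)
Your proposal is correct and takes essentially the same route as the paper: induction on the annihilator series, applying the inductive hypothesis to $B/\Ann(B)$ to get $B=A+\Ann(B)$, and then showing $B^2\subseteq A$ because every cross term involving elements of $\Ann(B)$ vanishes. The paper merely compresses your explicit computation $b_1*b_2=a_1*a_2$ into the inclusion $B*B\subseteq A*A+\Ann(B)*A+A*\Ann(B)+\Ann(B)*\Ann(B)\subseteq A*A$, which is the same annihilator-collapsing argument you spell out via Lemma~\ref{lem:id*}.
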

\begin{proof}
The proof is by induction on the length of the annihilator series. Consider the series $$ 0 \subseteq \Ann(B) \subseteq ... \subseteq \Ann_n(B) = B.$$

Consider the skew brace $B/\Ann(B)$, on which the induction hypothesis applies. This shows that $B = A + \Ann(B)$. In particular, this implies that $$ B*B \subseteq A*A + \Ann(B)*A +A *\Ann(B) + \Ann(B)*\Ann(B) \subseteq A*A.$$
Hence, we obtain that $B=A+B*B \subseteq A$, showing the result.
\end{proof}

\begin{cor}\label{cor:surjectivebrace}
Let $B$ be an annihilator nilpotent skew brace.
Let $f:A\rightarrow B$ be a homomorphism of skew braces and let 
$\overline{f}:A/A^2 \rightarrow B/B^2$ be the natural induced homomorphism of skew braces.
Then $f$ is surjective if and only if  $\overline{f}$ is surjective.
\end{cor}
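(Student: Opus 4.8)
The plan is to reduce the whole statement to Theorem \ref{thm:HirschBraceNilp}, so that the corollary becomes essentially a formal consequence. The forward implication is immediate: if $f$ is surjective, then composing with the canonical projection $B\to B/B^2$ shows that $\overline{f}$ is surjective as well, since every coset $b+B^2$ is hit by $f(a)+B^2$ for any preimage $a$ of $b$. Thus all the content lies in the converse, and I would devote the argument to showing that surjectivity of $\overline{f}$ forces $f$ to be onto.

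First I would record the image $C=f(A)$, which is a sub skew brace of $B$, since homomorphic images of skew braces are sub skew braces; note also that $\overline{f}$ is well defined precisely because $f(A*A)\subseteq f(A)*f(A)\subseteq B*B=B^2$. The key translation step is to rephrase surjectivity of $\overline{f}$ as a condition on $C$. Since $f$ is in particular additive, $C$ is a subgroup of $(B,+)$, and $B^2$ is an ideal, so the image of $\overline{f}$ is exactly the additive subgroup $(C+B^2)/B^2$ of $B/B^2$. Hence $\overline{f}$ is surjective if and only if $C+B^2=B$ — which is precisely the hypothesis appearing in Theorem \ref{thm:HirschBraceNilp}. (One may also observe, as a sanity check, that $B/B^2$ is a trivial brace, so its operation is additive and no multiplicative variant of this condition can arise.)

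At this point the hypotheses of Theorem \ref{thm:HirschBraceNilp} are met: $B$ is annihilator nilpotent by assumption, and $C$ is a sub skew brace with $C+B^2=B$. Applying that theorem yields $C=B$, that is $f(A)=B$, which is exactly the surjectivity of $f$. This closes the converse and hence the corollary.

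I expect the only point requiring care — rather than a genuine obstacle — to be the translation step of the second paragraph: one must verify that surjectivity of $\overline{f}$ is equivalent to the additive condition $f(A)+B^2=B$, and that $f(A)$ is genuinely a sub skew brace to which the earlier theorem applies. Once these routine verifications are in place, the heavy lifting has already been carried out in Theorem \ref{thm:HirschBraceNilp}, and the corollary follows with no further computation.
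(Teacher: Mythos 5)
Your proposal is correct and follows essentially the same route as the paper: both arguments reduce the converse to the observation that surjectivity of $\overline{f}$ gives $f(A)+B^2=B$, and then invoke Theorem~\ref{thm:HirschBraceNilp} to conclude $f(A)=B$. Your extra verifications (that $f(A)$ is a sub skew brace and that the image of $\overline{f}$ equals $(f(A)+B^2)/B^2$) are routine points the paper leaves implicit, so there is no substantive difference.
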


\begin{proof}
Assume $\overline{f}$ is surjective. Let $b\in B$. Then there exists $a\in A$ so that
 $$ \overline{f} (a+A^2) =f(a)+ B^2 = b+B^2.$$
Hence, $b\in f(A)+B^2$. Since this holds for arbitrary $b\in B$, it follows that $B=f(A)+B^2$. So, by Theorem~\ref{thm:HirschBraceNilp}, $B=f(A)$. This proves the sufficiency of the statement. The necessity is obvious.
\end{proof}

\begin{cor} Let $B$ be an annihilator  nilpotent skew brace.
A subset 
$S\subseteq B$ generates the skew brace  $B$ if and only if $\overline{S}$ (the natural image of $S$ in $B/B^2$) generates the skew brace  $B/B^2$ (and thus it generates $B/B^2$ as an additive group). 
In particular, if $T$ is a subset of $B$ that contains a representative from each $\lambda$-orbit then $B$ is generated (as a skew brace) by the set $T$.
\end{cor}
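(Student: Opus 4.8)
The plan is to deduce both assertions from the brace-theoretic Hirsch theorem, Theorem~\ref{thm:HirschBraceNilp}, using the canonical projection $\pi\colon B\to B/B^2$. The necessity of the first equivalence is immediate: $\pi$ is a surjective homomorphism of skew braces, so if $S$ generates $B$ then $\overline{S}=\pi(S)$ generates $\pi(B)=B/B^2$.

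For the sufficiency I would set $A=\langle S\rangle$, the sub skew brace of $B$ generated by $S$, and aim to verify the single hypothesis $A+B^2=B$ that is needed to apply Theorem~\ref{thm:HirschBraceNilp}. The point is that $\pi(A)=\langle\pi(S)\rangle=\langle\overline{S}\rangle=B/B^2$ by assumption. The step requiring care is translating the equality $\pi(A)=B/B^2$ of images back into the additive statement $A+B^2=B$: since $B^2$ is an ideal, it is $\lambda$-invariant and normal in both groups, so $a\circ B^2=a+\lambda_a(B^2)=a+B^2$ for every $a\in B$, i.e.\ the additive and multiplicative cosets of $B^2$ coincide. Hence surjectivity of $\pi|_A$ is exactly the condition $A+B^2=B$, and Theorem~\ref{thm:HirschBraceNilp} then forces $A=B$, so $S$ generates $B$. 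The parenthetical remark is automatic, since $(B/B^2)^2=\pi(B^2)=0$ makes $B/B^2$ a trivial brace, in which the sub skew brace generated by a set coincides with the additive subgroup it generates.

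For the ``in particular'' clause I would exploit the same triviality of $B/B^2$. As $B/B^2$ is a trivial brace, the induced $\lambda$ on it is the identity, so for all $a,x\in B$ one has $\pi(\lambda_a(x))=\lambda_{\pi(a)}(\pi(x))=\pi(x)$; that is, $\pi$ collapses each $\lambda$-orbit of $B$ to a single point. Since $\pi$ is surjective, a set $T$ meeting every $\lambda$-orbit satisfies $\pi(T)=\pi(B)=B/B^2$, so $\overline{T}$ already equals $B/B^2$ as a set and in particular generates it. Applying the first part of the corollary with $S=T$ then yields $\langle T\rangle=B$.

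I expect no single deep obstacle, as the real content lies in Theorem~\ref{thm:HirschBraceNilp}; the only genuinely delicate bookkeeping is the identification $\pi(A)=B/B^2\iff A+B^2=B$, which rests on the coincidence of additive and multiplicative cosets modulo the ideal $B^2$, together with the observation that $\lambda$ acts trivially on the trivial brace $B/B^2$.
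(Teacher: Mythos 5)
Your proposal is correct and takes essentially the same route as the paper's proof: both reduce the sufficiency to Theorem~\ref{thm:HirschBraceNilp} by setting $A=\langle S\rangle$ and translating $\overline{S}$ generating $B/B^2$ into $A+B^2=B$, and both settle the $\lambda$-orbit claim via $\lambda_a(x)-x=a*x\in B^2$, which makes the projection constant on $\lambda$-orbits. Your extra bookkeeping (coincidence of additive and multiplicative cosets modulo the ideal $B^2$, and triviality of the brace $B/B^2$) simply makes explicit details the paper leaves implicit.
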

\begin{proof}
Let $A$ be the skew brace of  $B$ generated by $S$. 
If $\overline{S}$ generates $B/B^2$ then $B=A +B^2$, where $A$ is the skew brace of $B$ generated by $S$. Hence, by Theorem~\ref{thm:HirschBraceNilp}, $B=A$. The converse is obvious.
To prove the final claim it is sufficient to note that $\lambda_b (a)-a =b*a\in B^2$.
\end{proof}

\begin{defn}
Let $B$ be a skew brace and $H$ a sub skew brace. We call (if it exists) the largest sub skew brace $N$ of $B$ such that $H$ is an ideal in $N$, the idealizer of $H$ in $B$. We denote this by $I(H)$. 
\end{defn}

As noted in \href{https://arxiv.org/abs/2310.07474v1}{arXiv:2310.07474}, the idealizer not always exists.

If it exists, then $I(H)$ can also be characterized as the largest sub skew brace $A$ of $B$ such that $A*H \subseteq H$, $H*A \subseteq H$ and $\left[H,A\right]_+ \subseteq H$. These conditions remind of the notions of idealizer within ring theory and the normalizer within group theory.
 The following proposition does not use the idealizer. Using the length of the annihilator chain, the proof is straightforward, hence it is omitted.

\begin{pro}
Let $B$ be a skew brace. 
\begin{enumerate}
\item If $B$ is annihilator nilpotent, then every sub skew brace $A$ has a subideal chain, i.e. there exists a chain of sub skew braces $A=H_1 \subseteq H_2 \subseteq... \subseteq H_n=B $ with $H_i$ an ideal of $H_{i+1}$.
\item If every sub skew brace has a subideal chain, then every proper sub skew brace is an ideal in a strictly larger sub skew brace of $B$. In particular, in this case every maximal sub skew brace is an ideal.
\end{enumerate}
\end{pro}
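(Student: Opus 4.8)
The plan is to treat the two parts separately, mirroring the classical group-theoretic facts that in a nilpotent group every subgroup is subnormal and that such groups satisfy the normalizer condition.

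For part~(1), let $0=\Ann_0(B)\subseteq\Ann_1(B)\subseteq\cdots\subseteq\Ann_n(B)=B$ be the annihilator series and, for a fixed sub skew brace $A$, set $H_i=A+\Ann_i(B)$. Then $H_0=A$, $H_n=B$, and $H_i\subseteq H_{i+1}$, so it suffices to check that each $H_i$ is a sub skew brace and that $H_i$ is an ideal of $H_{i+1}$. The first point I would establish once and for all: if $I$ is any ideal of $B$ and $A$ a sub skew brace, then $a+I=a\circ I$ for every $a$ (because $a\circ i=a+\lambda_a(i)$ and $\lambda_a(I)=I$), whence $A+I=A\circ I$; being simultaneously a subgroup of $(B,+)$ (product of a subgroup and a normal subgroup) and of $(B,\circ)$, it is a sub skew brace.

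For the ideal property I would reduce modulo $\Ann_i(B)$. Writing $\pi\colon B\to \bar B:=B/\Ann_i(B)$, by the correspondence theorem it is enough to show that $\bar A=\pi(A)$ is an ideal of $\bar A+J$, where $J=\Ann_{i+1}(B)/\Ann_i(B)=\Ann(\bar B)$. Every $j\in J$ is additively and multiplicatively central and satisfies $\lambda_j=\id$ and $\lambda_x(j)=j$ for all $x$; since $N:=\bar A+J=\bar A\circ J$ and $a+j=a\circ j$ for $a\in\bar A$, $j\in J$, conjugation in $(N,+)$ or in $(N,\circ)$ by such an element agrees with conjugation by $a$, so $\bar A$ is normal in both groups. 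For $\lambda$-invariance one uses $\lambda_{a\circ j}=\lambda_a\lambda_j=\lambda_a$ together with the fact that $\bar A$, being a sub skew brace, satisfies $\lambda_a(\bar A)\subseteq\bar A$. This gives that $\bar A$ is an ideal of $N$, hence $H_i$ is an ideal of $H_{i+1}$, and the chain $A=H_0\subseteq\cdots\subseteq H_n=B$ is the desired subideal chain.

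For part~(2), let $A$ be a proper sub skew brace and take any subideal chain $A=H_1\subseteq\cdots\subseteq H_m=B$. Since $A\neq B$, there is a smallest index $j$ with $H_j\neq A$; then $A=H_{j-1}$ is an ideal of $H_j$ and $A\subsetneq H_j$. As the idealizer $I(A)$ is the largest sub skew brace in which $A$ is an ideal, $H_j\subseteq I(A)$, so $A\subsetneq I(A)$. The ``in particular'' then follows: if $M$ is a maximal sub skew brace, then $M\subsetneq I(M)\subseteq B$ forces $I(M)=B$ by maximality, and $M$ is an ideal of $I(M)=B$. The argument is essentially routine; the only point requiring care is the verification, in part~(1), that $H_i$ is an ideal of $H_{i+1}$ in all three required senses---normality for $+$, normality for $\circ$, and $\lambda$-invariance---and in particular the $\lambda$-invariance, which is exactly where the defining property $\lambda_j=\id$ of the annihilator quotient is used.
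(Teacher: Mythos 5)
Your proof is correct and takes exactly the route the paper intends: the paper omits its proof with the remark that it is straightforward ``using the length of the annihilator chain'', and your chain $H_i=A+\Ann_i(B)$, verified via the quotient $B/\Ann_i(B)$ and the defining properties $\lambda_j=\id$ and $\lambda_x(j)=j$ of annihilator elements, is precisely that argument, including the needed observations $a+I=a\circ I$ and the three-fold ideal check. Part~(2) is the standard normalizer-condition argument and is likewise correct.
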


If $B$ is a skew brace and $S$ is a subset of $B$
we write $(S)$ to denote the ideal of $B$ generated by $S$. 
Recall from \cite{MR4256133} the following result: the intersection of all maximal 
ideals of a skew brace $B$ equals the set of non-generating elements $b$  
of $B$, i.e.
if  the ideal  $(S \cup \{b\}) = B$ then  the ideal  $(S) = B$, for each subset 
$S \subseteq B$.
This set is denoted by $\Rad(B)$ and is called the radical of $B$.
If there do not exist maximal ideals, then, by definition,  $\Rad(B)=B$.
\begin{cor}
Let $B$ be an annihilator nilpotent skew brace. Then every maximal sub brace skew brace is an ideal and  $\Rad(B)$ is the intersection of all maximal sub skew braces.
\end{cor}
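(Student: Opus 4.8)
The first assertion requires no new work: by the preceding proposition an annihilator nilpotent skew brace has the property that every sub skew brace lies in a subideal chain, and hence every maximal sub skew brace is already an ideal. The plan for the second assertion is to show that the maximal sub skew braces and the maximal ideals of $B$ are literally the same subobjects of $B$; once this is established, the recalled description of $\Rad(B)$ from \cite{MR4256133} as the intersection of all maximal ideals yields the statement immediately.

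One inclusion is purely formal. If $M$ is a maximal sub skew brace, then the first assertion shows $M$ is an ideal; since every ideal is in particular a sub skew brace, no ideal can lie strictly between $M$ and $B$, so $M$ is a maximal ideal. Thus every maximal sub skew brace is a maximal ideal, and in particular the intersection of all maximal ideals is contained in the intersection of all maximal sub skew braces.

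The essential point is the reverse inclusion, and here I would analyse the quotient $B/M$ attached to a maximal ideal $M$. Being a nonzero annihilator nilpotent brace, $B/M$ has nonzero annihilator $\Ann(B/M)$: otherwise the annihilator series would remain stationary at $0$ and could never exhaust $B/M$. Since $M$ is a maximal ideal, $B/M$ has no proper nonzero ideals, so by simplicity $\Ann(B/M)=B/M$. This says precisely that the two operations of $B/M$ coincide and its additive group is abelian, i.e.\ $B/M$ is a trivial brace of abelian type; its ideals are then exactly its additive subgroups, and simplicity forces $(B/M,+)\cong\Z/p$ for some prime $p$. Because $\circ=+$ on $B/M$, its sub skew braces are likewise just its additive subgroups, none of which lies properly between $0$ and $B/M$. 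Hence $M$ is a maximal sub skew brace, which gives the reverse inclusion.

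Combining the two inclusions, the maximal sub skew braces are exactly the maximal ideals, so the two intersections agree and $\Rad(B)$ is the intersection of all maximal sub skew braces (the degenerate case $\Rad(B)=B$, in which no maximal ideals exist, is handled simultaneously, since then no maximal sub skew braces exist either). I expect the only genuinely delicate step to be the structural identification in the third paragraph—that a simple annihilator nilpotent skew brace is forced to be the trivial brace on $\Z/p$—which hinges on the nonvanishing of the annihilator of a nonzero annihilator nilpotent brace together with the fact that, in a trivial brace, both ideals and sub skew braces collapse to ordinary additive subgroups.
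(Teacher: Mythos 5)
Your proof is correct and follows the route the paper intends (the paper omits the proof of this corollary as immediate from the preceding proposition together with the recalled description of $\Rad(B)$ as the intersection of all maximal ideals): the proposition gives that every maximal sub skew brace is an ideal, and everything then reduces to identifying the maximal sub skew braces with the maximal ideals. Your bridge for that identification --- a nonzero annihilator nilpotent brace has nonzero annihilator, so for a maximal ideal $M$ the simple quotient $B/M$ must equal its own annihilator, hence is the trivial brace on $\Z/p$, whence sub skew braces and ideals of $B/M$ alike collapse to additive subgroups and $M$ is maximal as a sub skew brace --- is exactly the one nontrivial step, and you handle it correctly, including the degenerate case $\Rad(B)=B$ where both families of maximal subobjects are empty.
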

Note that, with a similar proof as in the equivalent characterizations of the Frattini subgroup, one can see that the intersection of all maximal sub skew braces coincides with the set of non-generating elements as skew brace. Here, a non-generating element as skew brace is an element $x$ of a skew brace $B$ such that for any subset $S$ of $B$ one has that the sub skew brace generated by $S \cup \left\lbrace x \right\rbrace$ is $B$ implies that the sub skew brace generated by $S$ is $B$.

\subsection*{Acknowledgments} 

The third author is supported in part by OZR3762 
of Vrije Universiteit Brussel. The second author is supported
by Fonds voor Wetenschappelijk Onderzoek (Flanders). We thank the referee for helpful comments and corrections. 

\bibliographystyle{abbrv}
\bibliography{refs}
\end{document}